\newtheorem{theorem}{Theorem}[section]
\newtheorem{lemma}[theorem]{Lemma}
\newtheorem{corollary}[theorem]{Corollary}
\theoremstyle{remark}
\newtheorem{remark}[theorem]{Remark}
\numberwithin{equation}{section}
\newcommand{\dd}{ { \mathrm{d}} }
\newcommand{\ee}{ { \mathrm{e}} }
\newcommand{\cL}{{ \mathcal L}}
\newcommand{\bm}[1]{\begin{bmatrix} #1\end{bmatrix}}
\newcommand{\cU}{\mathcal{U}}
\newcommand{\cI}{\mathcal{I}}
\newcommand{\cH}{{ \mathcal H}}
\newcommand{\cB}{{ \mathcal B}}
\newcommand{\cF}{{ \mathcal F}}
\newcommand{\cD}{{ \mathcal D}}
\newcommand{\cO}{{ \mathcal O}}
\newcommand{\cS}{{ \mathcal S}}
\newcommand{\Cbb}{C_{\mathrm{b}}^2}
\newcommand{\HH}{\cH}
\newcommand{\VV}{\mathcal{V}}
\newcommand{\HHnorm}[1]{\|#1\|}
\newcommand{\HHinner}[2]{\langle#1,#2\rangle}
\newcommand{\dHnorm}[2]{\|#1\|_{\dH^{#2}}}
\newcommand{\dHinner}[3]{\langle#1,#2\rangle_{\dH^{#3}}}
\newcommand{\Lnorm}[1]{\| #1 \|_{L_2(\cD)}}
\newcommand{\Linner}[2]{\langle#1,#2\rangle_{L_2(\cD)}}
\newcommand{\R}{{ \mathbb R}}
\DeclareMathOperator{\Tr}{Tr}
\DeclareMathOperator{\Bor}{Bor}
\newcommand{\HS}{\mathrm{HS}}
\newcommand{\norm}[2]{\|#2\|_{#1}}
\newcommand{\T}[1]{\tilde{#1}}
\newcommand{\D}{D}
\newcommand{\IR}{\mathbb R}
\newcommand{\dH}{\dot{H}}
\newcommand{\bE}{{ \mathbf E}}
\newcommand{\Hnorm}[2]{\| #1\|_{\HH^{#2}}}
\newcommand{\la}{\langle}
\newcommand{\ra}{\rangle}
\newcommand{\EE}{\bE}
\title[Weak convergence of fully discrete approximations] {Weak convergence of finite element approximations of linear stochastic evolution equations with additive noise II. Fully discrete schemes.}
\author[M.~Kov\'acs]{Mih\'aly Kov\'acs}
\address{Department of Mathematics and Statistics,
  University of Otago, P.O.~Box 56, Dune\-din, New Zealand}
\email{mkovacs@maths.otago.ac.nz}
\author[S.~Larsson]{Stig Larsson$^{1,2}$}
\address{
  Department of Mathematical Sciences,
  Chalmers University of Technology and University of Gothenburg,
  SE--412 96 G\"oteborg,
  Sweden}
\email{stig@chalmers.se}
\author[F.~Lindgren]{Fredrik Lindgren$^1$}
\address{
  Department of Mathematical Sciences,
  Chalmers University of Technology and University of Gothenburg,
  SE--412 96 G\"oteborg,
  Sweden}
\email{fredrik.lindgren@chalmers.se}
\thanks{$^1$Supported by the Swedish Research Council (VR)}
\thanks{$^2$Supported by the Swedish Foundation for Strategic Research (SSF)
through GMMC, the Gothenburg Mathematical Modelling Centre.}
\keywords{finite element, parabolic equation, hyperbolic equation, stochastic, heat equation, Cahn-Hilliard-Cook equation, wave equation, additive noise, Wiener process, error estimate, weak convergence}
\subjclass[2000]{65M60, 60H15, 60H35, 65C30}
\begin{document}

\date{2012--09--18}

\begin{abstract} We present an abstract framework for analyzing the weak error of
  fully discrete approximation schemes for linear evolution equations
  driven by additive Gaussian noise. First, an abstract representation
  formula is derived for sufficiently smooth test functions. The
  formula is then applied to the wave equation, where the spatial
  approximation is done via the standard continuous finite element
  method and the time discretization via an $I$-stable rational
  approximation to the exponential function. It is found that the rate
  of weak convergence is twice that of strong
  convergence. Furthermore, in contrast to the parabolic case, higher
  order schemes in time, such as the Crank-Nicolson scheme, are
  worthwhile to use if the solution is not very regular. Finally we
  apply the theory to parabolic equations and detail a weak error
  estimate for the linearized Cahn-Hilliard-Cook equation as well as
  comment on the stochastic heat equation.
 \subjclass{65M60, 60H15, 60H35, 65C30}
 \keywords{finite element, parabolic equation, hyperbolic equation,
    stochastic, heat equation, Cahn-Hilliard-Cook equation, wave
    equation, additive noise, Wiener process, error estimate, weak
    convergence, rational approximation, time discretization}
\end{abstract}

\maketitle

\section{Introduction}

Let $\cU,\cH$ be real separable Hilbert spaces and consider the
following abstract stochastic Cauchy problem
\begin{equation}\label{spde}
\dd X(t)+AX(t)\,\dd t=B\,\dd W(t),\, t>0; \quad X(0)=X_0,
\end{equation}
where $-A$ is the generator of a strongly continuous semigroup
$\{E(t)\}_{t\ge 0}$ on $\cH$, $B\in \mathcal{B}(\cU,\cH)$, where
$\mathcal{B}(\cU,\cH)$ denotes the space of bounded linear operators
from $\cU$ to $\cH$. The process $\{W(t)\}_{t\ge 0}$ is a $\cU$-valued Wiener
process with covariance operator $Q\ge 0$ (selfadjoint, positive
semidefinite) with respect to a filtration $\{\mathcal{F}_t\}_{t\ge
  0}$ on a probability space $(\Omega, \mathcal{F},\mathbf{P})$.
 We note that, strictly speaking, the process $\{W(t)\}_{t\ge 0}$ is $\cU$-valued if and only if $\Tr Q<\infty$.
  Finally, $X_0$ is an $\cF_0$-measurable $\cH$-valued random
variable with finite mean. If
\begin{equation}\label{qt}
  \Tr\Big(\int_0^T E(t)BQB^*E(t)^*\,\dd t\Big)<\infty,
\end{equation}
then the unique weak solution is given by (see \cite[Chapter
5]{MR1207136})
\begin{align}  \label{eq:X}
X(t)=E(t)X_0+\int_0^tE(t-s)B\,\dd W(s).
\end{align}
With $G\colon\cH\rightarrow \R$ being a twice Fr\'echet differentiable
function with bounded and continuous first and second derivatives, we
study the weak error
\begin{equation}\label{eq:weakErr}
e(T)=\bE\big(G(\tilde{X}(T))-G(X(T))\big),
\end{equation}
where $\T{X}(T)$ is some approximation of the process $X$ at time $T$.

This paper is a sequel to \cite{KLLweak}.  It consists of three parts. In Section \ref{pre} we define some
central concepts and state important background results used
throughout the paper. In Section \ref{errep} we show that the error
formula in \cite[Theorem 3.1]{KLLweak} holds for a much wider class of
approximations of the solution to \eqref{spde} than stated in that
paper, which is concerned with spatial semidiscretization by finite
elements. Finally, in Sections \ref{sec:hyper} and \ref{sec:para}, the
usefulness of the general error formula in Theorem \ref{main} is
demonstrated through the fact that it can be applied to analyze fully
discrete schemes for a wide class of stochastic evolution equations:
hyperbolic and parabolic alike. The basic line of proof of the general
formula is adapted from \cite{debusscheprintems} which is concerned
with the stochastic heat equation.

The statement of Theorem \ref{main} deserves a motivation. Consider,
for example, the case when $\T{X}(T)$ is the value of a
semidiscretization in space with finite elements, so that
\begin{equation*}\label{eq:pertSol}
\T{X}(T)=\T{E}(T)\T{X}_0+\int_0^T\T{E}(T-s)\T{B}\,\dd W(s)
\end{equation*}
has the same form as the solution \eqref{eq:X} of the original problem
\eqref{spde}. Then an error formula may be derived with the aid of
Kolmogorov's backward equation and It\^o's formula as in
\cite{KLLweak}. It turns out that the error analysis is substantially
simplified if new processes are constructed by multiplying $X(t)$ and
$\T{X}(t)$ by suitable integrating factors. That is, define new,
drift-free processes
\begin{equation}\label{eq:Y}
Y(t)=E(T-t)X(t)=E(T)X_0+\int_0^tE(T-s)B\,\dd W(s)
\end{equation}
and
\begin{equation}\label{eq:Ytilde1}
\T{Y}(t)=\T{E}(T-t)\T{X}(t)=\T{E}(T)\T{X}_0+\int_0^t\T{E}(T-s)\T{B}\,\dd W(s),
\end{equation}
where
\begin{align}
X(T)=Y(T),\quad
\T{X}(T)=\T{Y}(T),\label{eq:dfpEqP}
\end{align}
with $\{Y(t)\}_{t\geq 0}$ being the solution of the equation
\begin{align}\label{eq:eqY}
\dd Y(t)=E(T-t)B\,\dd W(t),\, t>0; \quad Y(0)=E(T)X_0
\end{align}
and $\{\T{Y}(t)\}_{t\geq 0}$ solves
\begin{align}\label{eq:eqYtilde}
\dd \T{Y}(t)=\T{E}(T-t)\T{B}\,\dd W(t),\, t>0; \quad \T{Y}(0)=\T{E}(T)\T{X}_0.
\end{align}
However, if $\tilde{X}(T)$ is the result of a time-stepping scheme, then a process of the form \eqref{eq:X} is not immediate. We note that in \cite{DJR} the interpolation of the time-stepping operator family is performed in a manner that results in a family of deterministic operators $\{\tilde{E}(t)\}_{t\ge 0}$ with a weaker type of semigroup property. This property is sufficient to mimic the computations in \eqref{eq:Ytilde1}, but the operator family does not naturally admit deterministic error estimates. Thus, we have chosen an alternative path, as in \cite{debusscheprintems}, where the discrete semigroup property of the time-stepping operator family is used first, followed by piecewise constant interpolation between the grid points. This will, as we shall see, yield a drift-free process as in the right hand side of \eqref{eq:Ytilde1} such that \eqref{eq:dfpEqP} still holds and known deterministic error estimates can be used almost immediately.
In this case $\{\T{E}(t)\}_{t\geq 0}$ will
not be continuous in $t$ and does not have the semigroup property but
it turns out that these are not necessary. All we need to obtain the
fundamental formulas \eqref{e1}--\eqref{eq:F2} for the error is to
assume that there exists a well-defined process $\{\T{Y}(t)\}_{t\in [0,T]}$ of the form
\begin{equation}\label{eq:tildeY2}
\T{Y}(t)=\T{E}(T)\T{X}_0+\int_0^t\T{E}(T-s)\T{B}\,\dd W(s)
\end{equation}
such that \eqref{eq:dfpEqP} holds. Here $\{\tilde{E}(t)\}_{t\in
  [0,T]}\subset \cB(\cS,\cS)$ and $\tilde{B}\in \mathcal{B}(\cU,\cS)$,
where $\cS$ is a Hilbert subspace of $\cH$ with the same norm
(typically $\cS=\cH$ or $\cS$ is a finite-dimensional subspace of
$\cH$).  The process $\T{Y}$ is then well defined if
\begin{equation}\label{qtdiscrete}
  \Tr\Big(\int_0^T \T{E}(t)\T{B}Q[\T{E}(t)\T{B}]^*\,\dd t\Big)<\infty,
\end{equation}
and in this case it will be the unique weak solution of
\eqref{eq:eqYtilde}.  Here the adjoint
$[\tilde{E}(t)\tilde{B}]^*=\tilde{B}^*\tilde{E}(t)^*\colon \cS\to\cU$
is taken with respect to the scalar product in $\cH$.

In Section \ref{sec:hyper}, we apply the general formula from Theorem
\ref{main} to the error analysis of semi- and fully discrete numerical
schemes for the stochastic wave equation
\begin{equation}\label{eq:wave}
\dd\dot{U}(t)-\Delta U(t)\,\dd t=\dd W(t),
\ U(t)|_{\partial\cD}=0, ~ t> 0;\quad U(0)=U_0,~\dot{U}(0)=V_0,
\end{equation}
where the solution process $\{U(t)\}_{t\ge0}$ and the Wiener process
$\{W(t)\}_{t\ge0}$ take values in $\cU=L_2(\cD)$, where $\cD$ is a
sufficiently nice open bounded domain in $\R^{d}$. Writing
$X(t)=\bm{X_1(t), X_2(t)}^T:=\bm{U(t), \dot{U}(t)}^T$,
$X_0=\bm{X_{0,1},X_{0,2}}^T:=\bm{U_0 , V_0}^T$ and $\Lambda:=-\Delta$,
the wave equation \eqref{eq:wave} can be written in the form
\eqref{spde} with
\begin{equation*}
  A:=\begin{bmatrix}0&-I\\\Lambda&0\end{bmatrix}, \quad
  B:= \begin{bmatrix} 0\\ I \end{bmatrix}.
\end{equation*}
It is well known that $-A$ is the generator of a unitary, strongly
continuous semigroup (and thus a group) on the space
$\cH=L_2(\cD)\times (H_0^1(\cD))^*$.

The first result in Subsection \ref{wave} is a bound of the error $e(T)$
in the more specific context of the wave equation but keeping the
approximating process general. The bound is expressed in terms of the
operators and initial data in \eqref{eq:tildeY2} and \eqref{eq:X}.  In
Subsection \ref{sec:semitime} we apply this to single step rational
approximations of \eqref{spde}, that is, to solutions of the scheme
\begin{equation*}
X^j=R(kA)\big(X^{j-1}+B(W(t_j)-W(t_{j-1}))\big),
\end{equation*}
where $k$ is the step size and where the rational function $R$ fulfills
the approximation and stability properties
\begin{equation}\label{eq:ratApp}
\begin{aligned}
|R(iy)-e^{-iy}|&\leq C |y|^{p+1},&|y|\leq b,\\
|R(i y)|&\leq 1,& y\in \mathbb{R},
\end{aligned}
\end{equation}
for some positive integer $p$ and some $b>0$. If the initial value is smooth enough and
\begin{equation}\label{eq:Qassump}
\norm{\Tr}{\Lambda^{\beta-1/2}Q\Lambda^{-1/2}}<\infty,
\end{equation}
where $\norm{\Tr}{\cdot}$ denotes the trace norm, then for the weak
error in \eqref{eq:weakErr} we have
\begin{equation*}
|e(T)|= O(k^{\min(\tfrac{p}{p+1}2\beta,1)})\quad \text{as }
k\rightarrow 0.
\end{equation*}
It is important to note that, in contrast to the stochastic heat
equation, the convergence rate is improved with higher order schemes
if the noise is sufficiently irregular because $\tfrac{p}{p+1}$
increases with the order, $p$, of the method. This feature appears for
fully discrete schemes investigated in Subsection \ref{timespace} as
well, where the spatial approximation is performed with standard,
continuous, piecewise polynomial finite elements of order $r$ and the
temporal approximation again with rational functions as in
\eqref{eq:ratApp}. If $h$ denotes the size of the finite element mesh
and if \eqref{eq:Qassump} holds, then for the first component $X_1=U$
we have
\begin{equation*}
\big|\bE\big(G(\T{X}_1(T))-G(X_1(T))\big)\big|
=O(k^{\min(\tfrac{p}{p+1}2\beta,1)})+O(h^{\min(\tfrac{r}{r+1}2\beta,r)})
\ \text{as } h,k\to0.
\end{equation*}

It is a general phenomenon that for non-smooth noise, the rate of weak
convergence is twice that of the strong (mean-square) convergence. To
be able to compare the weak rate with the strong rate of fully
discrete schemes for the wave equation, and also because strong results
for such schemes are absent in the literature, we prove a strong
convergence result in Section \ref{strong} for the same
algorithm. Indeed, we find that the strong rate is half the weak rate
also in the case of the wave equation.

Our main motivation in applying the results of Section \ref{errep} to
the stochastic wave equation is the fact that this is one of the
canonical SPDE's that is less understood from the numerical point of
view. Results on weak convergence can be found in \cite{Hwwave} but
are confined to a leap-frog scheme in one spatial variable. Also, the
test-functions in that paper differ from ours, making the results
difficult to compare. In \cite{KLLweak} the stochastic wave equation
is discretized with finite elements in space only but in several
spatial dimensions. The findings are in accordance with the results of
the present paper.

For the stochastic wave equation on the one-dimensional real line with
white noise the strong rate $1/2$ for the leap-frog scheme is
proved in \cite{Walsh}. This rate is also proved to be optimal. For
white noise in one dimension the algorithm discussed in the present
paper only gives a strong rate of $p/2(p+1)$ and hence it is not
optimal. The reason for this is explained in \cite{KLS} with the fact
that the Green's function of the leap-frog scheme coincides at the
meshpoints with the Green's function for the wave equation, which is
not true for the present scheme. The latter paper, \cite{KLS}, studies
spatial discretization with finite elements in several dimensions with
findings in agreement (in one dimension) with the finite difference
spatial approximation studied in \cite{San}.

In connection to hyperbolic equations \cite{MR2268663} should be
mentioned, where the authors are concerned with weak as well as strong
convergence of a time discretization scheme for a nonlinear
stochastic Schr\"odinger equation.

Finally, in Section \ref{sec:para}, we apply Theorem \ref{main} to the
backward Euler in time and finite element in space approximation of
the linearized Cahn-Hilliard-Cook equation. The reason for doing this
is twofold. First, we demonstrate that the general error
representation formula is useful in the parabolic setting as
well. Second, the stochastic heat equation, which is the canonical
parabolic equation, has been studied in
\cite{debusscheprintems}. While our general approach would certainly
be applicable to the heat equation as well, it would just reprove a
known result, maybe with a more transparent proof, see Remark
\ref{rem:dp}. Similarly to the wave equation, also here we find that
the rate of weak convergence is twice that of the strong convergence
\cite{LM} under essentially the same assumptions.

The literature on weak convergence for parabolic equations is
richer. We have already mentioned \cite{debusscheprintems} that proves
results for fully discrete schemes of the linear stochastic heat
equation. In \cite{GKL} spatial, finite element schemes are considered
for the same equation. Such schemes are also studied for the linear
Cahn-Hilliard-Cook equation as well as the linear heat equation in
\cite{KLLweak}. Semidiscrete temporal schemes are investigated in
\cite{MR2189620} for the linear heat equation and in \cite{debussche}
for the nonlinear heat equation in one dimension.  The techniques of
the latter paper are extended to spatially semidiscrete schemes in
multiple dimensions in \cite{ALweak}. A recent paper,
\cite{LindnerSchilling}, successfully uses the methods of
\cite{debusscheprintems} to study a linear parabolic SPDE driven by
impulsive noise instead of a Wiener noise. This indicates the
possibility of extending the results of the present paper to larger
classes of noise.

\section{Preliminaries}\label{pre}

Here we collect some background material from infinite-dimensional
stochastic analysis and stochastic PDEs. We use the semigroup approach
of DaPrato and Zabczyk and we refer to the monograph \cite{MR1207136}
for details and proofs.

Let $\cU$ and $\cH$ be real separable Hilbert spaces; we often denote
both their norms and scalar products by $\|\cdot\|$ and
$\langle\cdot,\cdot\rangle$ when the meaning is clear from the
context. We denote the space of bounded linear operators from $\cU$ to
$\cH$ by $\cB(\cU,\cH)$ and the $p$:th Schatten class of operators
from $\cU$ to $\cH$ by $\cL_p(\cU,\cH)$. They are Banach spaces for
all integers $p\geq 1$ and we will denote their norms by
$\|\cdot\|_{\cL_p(\cU,\cH)}$.  The operators in $\cL_1(\cU,\cH)$ are
also refered to as trace class operators and operators in
$\cL_2(\cU,\cH)$ as Hilbert-Schmidt operators. The space
$\cL_2(\cU,\cH)$ is a Hilbert space with inner product denoted
$\langle\cdot,\cdot \rangle_{\cL_2(\cU,\cH)}$. When the underlying
Hilbert spaces are understood from the context we will write
$\|\cdot\|_{\Tr}=\|\cdot\|_{\cL_1(\cU,\cH)}$,
$\|\cdot\|_{\HS}=\|\cdot\|_{\cL_2(\cU,\cH)}$ and
$\langle\cdot,\cdot\rangle_{\HS}=\langle\cdot,\cdot
\rangle_{\cL_2(\cU,\cH)}$ in order to -- we hope -- increase the
readability of the paper.

In case $\cH=\cU$ we write $\cB(\cU)=\cB(\cU,\cU)$ and
$\cL_p(\cU)=\cL_p(\cU,\cU)$ for short. If $T\in \cL_1(\cU)$ and
$\{e_k\}_{k=1}^\infty$ is an orthonormal basis of $\cU$, then the
trace of $T$,
\begin{equation*}
\Tr(T):=\sum_{k=1}^\infty \langle T e_k,e_k\rangle_{\cU},
\end{equation*}
is a well defined number, independent of the choice of orthonormal
basis. Below we state a number of properties of Schatten class
operators. For proofs and definitions we refer to, for example,
\cite[Appendix C]{MR1207136}, \cite{Lax} and \cite{Weid}.

If $T\in \cL_p(\cU,\cH)$, then its adjoint $T^*\in \cL_p(\cH,\cU) $ and
\begin{equation}\label{eq:trace0}
\|T\|_{\cL_p(\cU,\cH)}=\|T^*\|_{\cL_p(\cH,\cU)}.
\end{equation}
If $\cU=\cH$ and $p=1$, then also
\begin{equation}
  \label{eq:trace3}
  \Tr(T)=\Tr(T^*)
\end{equation}
and
\begin{equation}\label{eq:trace1}
|\Tr(T)|\leq\|T\|_{\Tr}.
\end{equation}
Further, if $T$ is selfadjoint and positive semidefinite, then
$\Tr(T)\ge0$ and \eqref{eq:trace1} holds with equality.

If $\cU_1$, $\cU_2$, and $\cH$ are separable Hilbert spaces and
$T\in\cL_p(\cU_2,\cH)$ and if $S_1\in\cB(\cU_1,\cU_2)$ and
$S_2\in\cB(\cH,\cU_1)$, then
\begin{equation}\label{eq:TrHS2}
\begin{aligned}
&\|TS_1\|_{\cL_p(\cU_1,\cH)}\le \|T\|_{\cL_p(\cU_2,\cH)}\|S_1\|_{\mathcal{B}(\cU_1,\cU_2)},\\
&\|S_2T\|_{\cL_p(\cU_2,\cU_1)}\le \|T\|_{\cL_p(\cU_2,\cH)}\|S_2\|_{\mathcal{B}(\cH,\cU_1)} .
\end{aligned}
\end{equation}
If $S\in\cB(\cH,\cU)$ and $T\in\cL_1(\cU,\cH)$, then we also have
\begin{align}
\Tr(TS)=\Tr(ST). \label{eq:trace2}
\end{align}
Moreover, if $T\colon \cU\rightarrow \cH$ and $T^*T\in\cL_1(\cU)$,
then $T\in\cL_2(\cU,\cH)$, $TT^*\in\cL_1(\cH)$ and
\begin{equation}\label{eq:TrHS0}
\begin{aligned}
\|T^*T\|_{\Tr}&=\Tr(T^*T)=\|T\|^2_{\HS}=\|T^*\|^2_{\HS}\\
&=\Tr(TT^*)=\|TT^*\|_{\Tr}.
\end{aligned}
\end{equation}
Finally, we note that if $T\in\cL_2(\cU,\cH)$ and $S\in
\cL_2(\cH,\cU)$, then $TS\in\cL_1(\cH)$ and
\begin{equation}\label{eq:TrHS1}
\|TS\|_{\Tr}\leq\|T\|_{\HS}\|S\|_{\HS}=(\Tr(TT^*)\Tr(SS^*))^{1/2}.
\end{equation}

To be able to compare various assumptions on the regularity of the
noise, where the regularity usually is measured in the trace or
Hilbert-Schmidt norms, we cite Theorem 2.1 in \cite{KLLweak}.

\begin{theorem}\label{thm:aq}
Assume that $Q\in\cB(\cH)$ is selfadjoint, positive semidefinite and
that $A$ is a densely defined, unbounded, selfadjoint, positive
definite, linear operator in $\cH$ with an orthonormal basis of
eigenvectors. Then, for $s\in\mathbb{R}$, $\alpha>0$, we have
\begin{align}
\nonumber
&\|A^{\frac{s}{2}}Q^{\frac{1}{2}}\|^2_{\HS}\le \|A^sQ\|_{\Tr}
\le \|A^{s+\alpha}Q\|_{\mathcal{B}(\cH)}\|A^{-\alpha}\|_{\Tr},\\
\label{c2}
&\|A^{\frac{s}{2}}Q^{\frac{1}{2}}\|^2_{\HS}\le \|A^{s+\frac{1}{2}}QA^{-\frac{1}{2}}\|_{\Tr}.
\end{align}
Furthermore, if $A$ and $Q$ have a common basis of eigenvectors, in particular, if $Q=I$, then
\begin{equation*}
\|A^{\frac{s}{2}}Q^{\frac{1}{2}}\|^2_{\HS}= \|A^sQ\|_{\Tr}=\|A^{s+\frac{1}{2}}QA^{-\frac{1}{2}}\|_{\Tr}.
\end{equation*}
\end{theorem}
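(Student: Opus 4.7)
The plan is to reduce everything to one master identity and then exploit cyclic invariance of the trace. Since $A$ has an orthonormal eigenbasis and is positive definite, the fractional powers $A^{s/2}$, $A^{-\alpha}$, $A^{\pm 1/2}$, etc.\ are unambiguously defined via the functional calculus (possibly unbounded for positive exponents). Applying \eqref{eq:TrHS0} to $T=A^{s/2}Q^{1/2}$ gives the basic identity
\begin{equation*}
\|A^{s/2}Q^{1/2}\|_{\HS}^2 = \Tr\bigl(Q^{1/2}A^s Q^{1/2}\bigr),
\end{equation*}
which is automatically a nonnegative real number. In each inequality I may assume the right-hand side is finite (otherwise there is nothing to prove), and this finiteness is what justifies membership of the relevant products in $\cL_1(\cH)$ so that the tools \eqref{eq:trace2} and \eqref{eq:TrHS2} apply.

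For the first inequality, cycling the left factor $Q^{1/2}$ to the right via \eqref{eq:trace2} gives $\Tr(Q^{1/2}A^sQ^{1/2}) = \Tr(A^s Q)$, and \eqref{eq:trace1} then bounds this by $\|A^sQ\|_{\Tr}$ (the absolute value from \eqref{eq:trace1} is redundant since the quantity equals the nonnegative $\|A^{s/2}Q^{1/2}\|_{\HS}^2$). For the second inequality, factor $A^sQ = A^{-\alpha}\cdot(A^{s+\alpha}Q)$ and apply the ideal estimate \eqref{eq:TrHS2} with $T = A^{-\alpha}\in \cL_1(\cH)$ (trace class by hypothesis) and $S_1 = A^{s+\alpha}Q\in\cB(\cH)$ (bounded by hypothesis) to conclude $\|A^sQ\|_{\Tr} \le \|A^{-\alpha}\|_{\Tr}\|A^{s+\alpha}Q\|_{\cB(\cH)}$.

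For the third inequality, insert $A^{-1/2}A^{1/2}$ into the master identity and cycle once more:
\begin{equation*}
\|A^{s/2}Q^{1/2}\|_{\HS}^2 = \Tr\bigl(Q^{1/2}A^{-1/2} \cdot A^{s+1/2}Q^{1/2}\bigr) = \Tr\bigl(A^{s+1/2}Q A^{-1/2}\bigr),
\end{equation*}
and bound this by $\|A^{s+1/2}Q A^{-1/2}\|_{\Tr}$ via \eqref{eq:trace1}. For the equality statement, let $\{e_k\}$ be a common orthonormal eigenbasis of $A$ and $Q$ with $Ae_k = \lambda_k e_k$ and $Qe_k = \mu_k e_k$; evaluating all three quantities in this basis (using that $A^s Q$ and $A^{s+1/2}QA^{-1/2}$ are both diagonal and selfadjoint, hence their trace norm equals the sum of their diagonal entries) collapses each to the single series $\sum_k \lambda_k^s \mu_k$.

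The only real obstacle is the bookkeeping around the possibly unbounded operators $A^{s+1/2}$, $A^{-1/2}$, $A^{-\alpha}$: one must verify in each case that the relevant compositions actually lie in the Schatten class needed to legitimately invoke \eqref{eq:trace2} and \eqref{eq:TrHS2}. The cleanest way is to work from the outset in the eigenbasis of $A$, which turns every trace into an absolutely convergent (or divergent) series of nonnegative terms and makes each cyclic reshuffling a Fubini-type rearrangement rather than a genuine operator-theoretic manipulation.
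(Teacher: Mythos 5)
The paper does not prove this theorem; it is quoted verbatim from \cite{KLLweak} (``we cite Theorem 2.1 in \cite{KLLweak}''), so there is no in-paper proof to compare against. Your argument is correct and is the natural one: every quantity reduces to the series $\sum_j \lambda_j^s\langle Qe_j,e_j\rangle$ of nonnegative terms in the eigenbasis $\{e_j\}$ of $A$, after which \eqref{eq:trace1} and the ideal property \eqref{eq:TrHS2} finish the job. One caveat you already half-acknowledge: the appeals to \eqref{eq:trace2} are not literally licensed, since that identity requires one of the two factors to be trace class, and neither $A^sQ^{1/2}$ nor $Q^{1/2}A^{-1/2}$ is known to be. The eigenbasis computation you propose as the ``clean way'' is therefore not an optional bookkeeping device but the actual proof: for instance, if $A^{s+\frac12}QA^{-\frac12}\in\cL_1(\cH)$ then its trace equals the diagonal sum $\sum_j\lambda_j^{-1/2}\lambda_j^{s+1/2}\langle Qe_j,e_j\rangle=\sum_j\lambda_j^s\langle Qe_j,e_j\rangle$ in the $A$-eigenbasis (the diagonal-sum formula holds in any orthonormal basis for a trace class operator), and the left-hand side of \eqref{c2} is the same series computed via \eqref{eq:TrHS0}, so \eqref{eq:trace1} gives the bound. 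Likewise the factorization $A^sQ=A^{-\alpha}(A^{s+\alpha}Q)$ silently uses $D(A^{s+\alpha})\subseteq D(A^s)$, which holds because $A$ is positive definite (hence bounded below); this is worth a sentence but is not a gap. The equality case is handled correctly.
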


Let $(\Omega,\cF,\mathbf{P})$ be a probability space and
$L_p(\Omega,\cH)$ denote the space of random variables $X\colon
(\Omega,\cF)\rightarrow (\cH,\Bor(\cH))$, where $\Bor(\cH)$ denotes
the Borel $\sigma$-algebra of the separable Hilbert space $\cH$, such
that
\begin{equation*}
\|X\|_{L_p(\Omega,\cH)}^p=\bE\big(\|X\|_{\cH}^p\big)
=\int_{\Omega}\|X(\omega)\|_{\cH}^p\,\dd \mathbf{P}(\omega)<\infty.
\end{equation*}
By ''strong convergence'' we mean norm convergence in $L_2(\Omega,\cH)$.

If $\{F(t)\}_{t\in[0,T]}$ is a family of (deterministic) bounded
linear operators from a Hilbert space $\cU$ to another $\cH$, then the
It\^o integral (also called the Wiener integral as the integrand is
deterministic)
\begin{equation*}
\int_0^TF(t)\,\dd W(t)
\end{equation*}
with respect to a $\cU$-valued $Q$-Wiener process is well defined if
\begin{equation}\label{ito}
\int_0^T\|F(t)Q^{1/2}\|_{\HS}^2\,\dd t
=\int_0^T\Tr(F(t)QF^*(t))\,\dd t<\infty.
\end{equation}
If \eqref{ito} holds then we have It\^o's isometry
\begin{equation}\label{eq:Itoiso}
\Big\|\int_0^TF(t)\,\dd W(t)\Big\|_{L_2(\Omega,\cH)}^2
=\int_0^T\|F(t)Q^{1/2}\|_{\HS}^2\,\dd t.
\end{equation}

The functionals $G$ in \eqref{eq:weakErr} are called test functions
and throughout this paper we will assume that they are mappings from
$\cH$ to $\R$ with bounded and continuous first and second Fr\'echet
derivatives. That is, they belong to the space
\begin{equation*}
\Cbb=\Cbb(\cH,\R)=\left\{G \in C^2(\cH,\R) :\|G\|_{\Cbb(\cH,\R)}<\infty \right\},
\end{equation*}
where
\begin{equation*}
\|G\|_{\Cbb(\cH,\R)}:=\sup_{x\in \cH}\|G'(x)\|_{\cH}+\sup_{x\in H}\|G''(x)\|_{\cB(\cH)}.
\end{equation*}
The derivatives $G'(x)$ and $G''(x)$ are identified with elements in
$\cH$ and $\cB(\cH)$, respectively, by the Riesz representation
theorem. Note that $\|\cdot\|_{\Cbb(\cH,\R)}$ is only a seminorm and
that we do not assume that $G$ itself is bounded.

\section{An error representation formula}\label{errep}
Following \cite{debusscheprintems} and \cite{KLLweak}, we start by
developing a representation of the weak error $e(T)$ in
\eqref{eq:weakErr} in the abstract setting.  To do this we use the
process $\{Y(t)\}_{t\geq 0}$ in \eqref{eq:Y}, being the unique weak
solution of the drift-free differential equation \eqref{eq:eqY} with
the important property that $Y(T)=X(T)$.  We also introduce the
auxiliary problem
\begin{align*}
  \dd Z(t)=E(T-t)B\,\dd W(t),~t\in(\tau,T];\quad Z(\tau)=\xi,
\end{align*}
where $\xi\in L_1(\Omega,\cH)$ is an $\mathcal{F}_\tau$-measurable random variable.
Its unique weak solution is given by
\begin{align}
  \label{eq:Z}
  Z(t,\tau,\xi)=\xi+\int_\tau^tE(T-s)B\,\dd W(s), \quad t\in[\tau,T].
\end{align}
We note that $Z(t,0,E(T)X_0)=Y(t)$ and that $Z(t,t,\xi)=\xi$.  For
$G\in C^2_\text{b}(\cH,\mathbb{R})$, we define the continuous function
$u\colon\cH\times [0,T]\to \mathbb{R}$ by
\begin{equation}\label{g}
u(x,t)={\bf E}\big(G(Z(T,t,x))\big).
\end{equation}
It follows from \eqref{eq:Z} and \eqref{g} that the partial
derivatives of $u$ are given by
\begin{equation}\label{eq:uderiv}
\begin{aligned}
u_x(x,t)={\bf E}\big(G'(Z(T,t,x))\big) , \\
u_{xx}(x,t)={\bf E}\big(G''(Z(T,t,x))\big).
\end{aligned}
\end{equation}
Hence,
\begin{equation}\label{eq:uderBound}
\begin{aligned}
\sup_{(x,t)\in\HH\times [0,T]}\|u_{x}(x,t)\|
\leq \|G\|_{\Cbb(\HH,\R)},\\
\sup_{(x,t)\in\HH\times [0,T]}\|u_{xx}(x,t)\|_{\cB(\HH)}
\leq\|G\|_{\Cbb(\HH,\R)}.
\end{aligned}
\end{equation}

It is known that $u$ is a solution to Kolmogorov's equation
\begin{align} \label{Kolmogorov}
  \begin{aligned}
&u_t(x,t)+\tfrac{1}{2}\Tr\big(u_{xx}(x,t)E(T-t)BQB^*E(T-t)^*\big)=0,
&&(x,t)\in \cH\times[0,T),\\
&u(x,T)=G(x),
&& x\in \cH,
  \end{aligned}
\end{align}
see, for example, \cite[Lemma 6.1.1]{MR1985790}, where we
note that $G\in \Cbb(\cH,\R)$ is enough for existence and $G\in
U\Cbb(\cH,\R)$ is only needed for uniqueness as the proofs of
\cite[Theorems 3.2.3 and 3.2.7]{MR1985790} show and the global
boundedness of $G$ is not needed by \cite[Remark 3.2.1]{MR1985790}.
Finally, the partial derivatives of $u$ in \eqref{Kolmogorov} are
continuous on $[0,T)\times \cH$.

Our key result is the following representation formula for the weak
error.

\begin{theorem}\label{main}
  Assume that \eqref{qt} and \eqref{qtdiscrete} hold and let
  $\{X(t)\}_{t\in[0,T]}$ be the unique mild solution \eqref{eq:X} of
  \eqref{spde} and that $\T{X}(T)$ can be represented as
  $\T{X}(T)=\T{Y}(T)$, where $\T{Y}$ is given by \eqref{eq:tildeY2}.

If $G\in \Cbb(\cH,\mathbb{R})$, then the weak error $e(T)$ in
\eqref{eq:weakErr} has the representation
\begin{equation}\label{e1}
\begin{split}
e(T)& ={\bf E}\int_0^1
\Big\langle u_x\big(Y(0)+s(\tilde{Y}(0)-Y(0)),0\big),\tilde{Y}(0)-Y(0) \Big\rangle\,\dd s \\
&\quad
+\tfrac{1}{2}{\bf E}\int_0^T
\Tr\Big( u_{xx}(\tilde{Y}(t),t)\mathcal{O}(t)\Big)\,\dd t,
\end{split}
\end{equation}
where
\begin{equation}\label{eq:F1}
\mathcal{O}(t)=\big(\tilde{E}(T-t)\T{B}+E(T-t)B\big)Q\big(\tilde{E}(T-t)\T{B}-E(T-t)\T{B}\big)^*,
\end{equation}
or
\begin{equation}\label{eq:F2}
\mathcal{O}(t)=\big(\tilde{E}(T-t)\T{B}-E(T-t)B\big)Q\big(\tilde{E}(T-t)\T{B}+E(T-t)B\big)^*.
\end{equation}
\end{theorem}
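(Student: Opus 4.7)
The plan is to adapt the Debussche--Printems argument: express the weak error in terms of $u$, split it into an initial-data contribution and a running contribution handled by It\^o's formula plus Kolmogorov's equation, and finish with a symmetrization of the trace integrand.

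First I would rewrite $e(T)$ in terms of $u$. Using $X(T)=Y(T)$ and $\tilde X(T)=\tilde Y(T)$ together with $u(x,T)=G(x)$, and the fact that $Z(T,0,Y(0))=Y(T)$ with $Y(0)=E(T)X_0$ being $\mathcal{F}_0$-measurable and the Wiener integral in $Z$ independent of $\mathcal{F}_0$, one obtains $\mathbf{E}\,u(Y(0),0)=\mathbf{E}\,G(Y(T))$. Hence
\begin{equation*}
e(T)=\mathbf{E}\bigl[u(\tilde Y(T),T)-u(\tilde Y(0),0)\bigr]+\mathbf{E}\bigl[u(\tilde Y(0),0)-u(Y(0),0)\bigr].
\end{equation*}
The second bracket is immediate from the mean-value theorem for Fr\'echet derivatives and gives exactly the first line of \eqref{e1}; bounded\-ness of $u_x$ from \eqref{eq:uderBound} together with $\tilde Y(0)-Y(0)\in L_1(\Omega,\cH)$ legitimizes the use of Fubini to pull expectation and $s$-integral.

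For the first bracket I would apply It\^o's formula to $u(\tilde Y(t),t)$, using that $\tilde Y$ is a drift-free process with diffusion $\tilde E(T-t)\tilde B$ and integrability guaranteed by \eqref{qtdiscrete}. The stochastic integral has zero expectation since $u_x$ is bounded and the integrand is Hilbert--Schmidt, while Kolmogorov's equation \eqref{Kolmogorov} lets me substitute $u_t(x,t)=-\tfrac12\Tr[u_{xx}(x,t)E(T-t)BQB^*E(T-t)^*]$. This yields
\begin{equation*}
\mathbf{E}\bigl[u(\tilde Y(T),T)-u(\tilde Y(0),0)\bigr]=\tfrac12\mathbf{E}\int_0^T\Tr\bigl[u_{xx}(\tilde Y(t),t)\bigl(MQM^*-NQN^*\bigr)\bigr]\,\dd t,
\end{equation*}
where I write $M=\tilde E(T-t)\tilde B$ and $N=E(T-t)B$. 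The main technical obstacle is that $u$ and its derivatives are only guaranteed continuous on $\cH\times[0,T)$, not at $T$; I would handle this by first applying It\^o on $[0,T-\varepsilon]$ and then passing to the limit $\varepsilon\to0$ using continuity of $G$, the bounds \eqref{eq:uderBound}, the hypotheses \eqref{qt}--\eqref{qtdiscrete} for uniform integrability of the trace integrand, and dominated convergence.

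Finally I would symmetrize the operator $MQM^*-NQN^*$ via the identity
\begin{equation*}
MQM^*-NQN^*=\tfrac12\bigl[(M+N)Q(M-N)^*+(M-N)Q(M+N)^*\bigr],
\end{equation*}
which is a direct expansion. Since $u_{xx}(\tilde Y(t),t)$ is self-adjoint (as the Hessian of a real-valued $C^2$ function) and $Q$ is self-adjoint, the cyclicity of the trace \eqref{eq:trace2} together with \eqref{eq:trace3} shows that the traces of the two summands coincide. This collapses the symmetrized expression into either $(M+N)Q(M-N)^*$ or $(M-N)Q(M+N)^*$, giving \eqref{eq:F1} or \eqref{eq:F2} respectively, and completes the proof. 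Trace-class of the terms $MQM^*$ and $NQN^*$, needed to split the difference and to justify the manipulations, follows from \eqref{qt} and \eqref{qtdiscrete} combined with \eqref{eq:TrHS2} and \eqref{eq:TrHS0}.
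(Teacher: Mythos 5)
Your proposal is correct and follows essentially the same route as the paper: the same rewriting of $e(T)$ via $u$ and the splitting into an initial-data term plus an It\^o/Kolmogorov term on $[0,T-\varepsilon]$ with $\varepsilon\to0$, and the same reliance on selfadjointness of $Q$ and $u_{xx}$ together with $\Tr(T)=\Tr(T^*)$ and cyclicity to pass between the two forms of $\mathcal{O}(t)$. The only cosmetic difference is in the last algebraic step, where you use the polarization identity $MQM^*-NQN^*=\tfrac12[(M+N)Q(M-N)^*+(M-N)Q(M+N)^*]$ rather than the paper's add-and-subtract of the cross term $u_{xx}NQM^*$; in either case one should (as the paper does explicitly via \eqref{eq:TrHS1}) verify that the cross terms $MQN^*$, $NQM^*$ are trace class before splitting the traces.
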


\begin{proof}
  As in \cite[Theorem 9.8]{MR1207136}, since $\xi$ is
  $\mathcal{F}_t$-measurable, we have that
\begin{equation}\label{eq:ux}
u(\xi,t)={\bf E}\Big(G(Z(T,t,\xi))\Big| \mathcal{F}_t\Big).
\end{equation}
Thus, by the law of double expectation,
\begin{equation} \label{eq:u1}
  {\mathbf E} \Big( u(\xi,t) \Big)
   ={\mathbf E} \Big(
  {\mathbf E} \Big( G(Z(T,t,\xi))\Big|{\mathcal F}_t \Big) \Big)
   ={\mathbf E}\Big( G(Z(T,t,\xi)) \Big) .
\end{equation}
Therefore, taking also into account that $X(T)=Y(T)$, it follows that
\begin{equation*}
 {\mathbf E} \Big( G(X(T)) \Big)
 ={\mathbf E} \Big( G(Y(T)) \Big)= {\mathbf E}\Big( G(Z(T,0,Y(0)) \Big)
 = {\mathbf E} \Big( u(Y(0),0) \Big)
\end{equation*}
and, since  $\T{X}(T)=\T{Y}(T)$, we also have that
\begin{align*}
   {\mathbf E} \Big( G(\T{X}(T) )\Big)
= {\mathbf E} \Big( G(\tilde{Y}(T) )\Big)
={\mathbf E}\Big( G(Z(T,T,\tilde{Y}(T)) \Big)
= {\mathbf E} \Big( u(\tilde{Y}(T),T) \Big).
\end{align*}
Hence,
\begin{align*}
 \begin{split}
  e(T)&= {\mathbf E} \Big( G(\T{X}(T)) -  G(X(T)) \Big)
       = {\mathbf E} \Big( u(\tilde{Y}(T),T) -  u(Y(0),0) \Big) \\
      &= {\mathbf E} \Big( u(\tilde{Y}(0),0) -  u(Y(0),0) \Big)
      +  {\mathbf E} \Big( u(\tilde{Y}(T),T) -  u(\tilde{Y}(0),0) \Big).
\end{split}
\end{align*}
For the first term we note that due to the differentiability of $u$ we
can write
\begin{equation*}
\begin{split}
&{\mathbf E} \Big( u(\tilde{Y}(0),0) -  u(Y(0),0) \Big)\\
&\qquad ={\bf E}\int_0^1
\Big\langle u_x\big(Y(0)+s(\tilde{Y}(0)-Y(0))\big),\tilde{Y}(0)-Y(0)
\Big\rangle\,\dd s.
\end{split}
\end{equation*}
For the second term, we use It\^o's formula (see \cite[Theorem
2.1]{Brez}, where, in contrast to \cite[Theorem 4.17]{MR1207136},
uniform continuity of the appearing derivatives on bounded subsets of
$ \cH\times [0,T)$ is not assumed) for $u(\tilde{Y}(t),t)$ on
$[0,T-\epsilon]$ and passing to the limit $\epsilon\to 0+$ using the
continuity of $u$ on $\cH\times [0,T]$ and the continuity of the paths
of $\tilde{Y}(t)$ on $[0,T]$.  Thus, taking also Kolmogorov's equation
\eqref{Kolmogorov} into account, we get
\begin{equation}\label{tr}
\begin{split}
&{\mathbf E} \Big( u(\tilde{Y}(T),T) -  u(\tilde{Y}(0),0) \Big)\\
&\quad={\mathbf E}\int_0^T \Big\{ u_t(\tilde{Y}(t),t)
\\ &\qquad +\tfrac{1}{2}\Tr\Big(u_{xx}(\tilde{Y}(t),t)
[\tilde{E}(T-t)\T{B}]Q[\tilde{E}(T-t)\T{B}]^*\Big) \Big\} \,\dd t\\
&\quad=\tfrac{1}{2}{\mathbf E}
\int_0^T \Tr\Big(u_{xx}(\tilde{Y}(t),t)
\big\{ [\tilde{E}(T-t)\T{B}]Q[\tilde{E}(T-t)\T{B}]^*\\
&\quad \quad -[E(T-t)B]Q[E(T-t)B]^*\big\} \Big) \,\dd t.
 \end{split}
\end{equation}
The operator $u_{xx}(\xi,r)$ is bounded for every $\xi$ and $r$ and
both $\tilde{E}(s)\T{B}Q[\tilde{E}(s)\T{B}]^*$ and $E(s)BQ[E(s)B]^*$
are of trace class for almost every $s$ by assumptions \eqref{qt} and
\eqref{qtdiscrete}. Hence, the trace above is well defined for almost
every $t$ since by \eqref{eq:TrHS2} with $p=1$,
\begin{equation*}
\begin{split}
\|u_{xx}(\xi,r) [E(s)B]Q[E(s)B]^*\|_{\Tr}
& \leq
\|u_{xx}(\xi,r)\|_{\cB(\HH)} \, \| [E(s)B]Q[E(s)B]^*\|_{\Tr}\\
&=\|u_{xx}(\xi,r)\|_{\cB(\HH)}\, \Tr\big( [E(s)B]Q[E(s)B]^*\big),
\end{split}
\end{equation*}
where the last step is \eqref{eq:trace1} with equality, which holds
since $[E(s)B]Q[E(s)B]^*$ is selfadjoint and positive
semidefinite. The same computation can be done with the term involving
$[\tilde{E}(s)\T{B}]Q[\tilde{E}(s)\T{B}]^*$.  Further, the
operator $u_{xx}(\xi,r) [E(s)B] Q[\tilde{E}(s)\T{B}]^*$ is also of
trace class for almost every $s$, since, by \eqref{eq:trace0},
\eqref{eq:TrHS2}, and \eqref{eq:TrHS1},
\begin{equation*}
\begin{split}
&\|u_{xx}(\xi,r) [E(s)B] Q[\tilde{E}(s)\T{B}]^*\|_{\Tr}\\
&\quad\le \|u_{xx}(\xi,r)\|_{\cB(\HH)} \,
\|[E(s)B] Q[\tilde{E}(s)\T{B}]^*\|_{\Tr} \\
&\quad\leq\|u_{xx}(\xi,r)\|_{\cB(\HH)} \,
\|[E(s)B]Q^{1/2}\|_{\HS} \, \|Q^{1/2}[\tilde{E}(s)\T{B}]^*\|_{\HS}\\
&\quad=\|u_{xx}(\xi,r)\|_{\cB(\HH)}\,
\|[E(s)B]Q^{1/2}\|_{\HS} \,\| [\tilde{E}(s)\T{B}]Q^{1/2}\|_{\HS}\\
&\quad=\|u_{xx}(\xi,r)\|_{\cB(\HH)} \Big(
\Tr\big([E(s)B]Q[E(s)B]^*\big)
\Tr\big([\tilde{E}(s)\T{B}]Q[\tilde{E}(s)\T{B}]^*\big)\Big)^{1/2}.
\end{split}
\end{equation*}
Therefore we may rewrite the operator in the trace in \eqref{tr} by
adding and subtracting $u_{xx}(\xi,r) [E(s)B] Q[\tilde{E}(s)\T{B}]^*$ to
get
\begin{align*}
&u_{xx}(\xi,r)\big\{[\tilde{E}(s)\T{B}]
Q[\tilde{E}(s)\T{B}]^*-[E(s)B]Q[E(s)B]^*\big\}\\
&\quad=u_{xx}(\xi,r)[\tilde{E}(s)\T{B}-E(s)B]Q[\tilde{E}(s)\T{B}]^*\\
&\qquad+u_{xx}(\xi,r)[E(s)B]Q[\tilde{E}(s)\T{B}-E(s)B]^*\\
&\quad=:O_1+O_2.
\end{align*}
Further, using \eqref{eq:trace3}, \eqref{eq:trace2}, and that $Q$ and
$u_{xx}(\xi,r)$ are selfadjoint, we obtain
\begin{align}
\Tr(O_1+O_2)&=\Tr(O_1)+\Tr(O_2)=\Tr(O_1) +\Tr(O_2^*)\nonumber\\
&=\Tr(O_1)+\Tr([\tilde{E}(s)\T{B}-E(s)B]Q[E(s)B]^*u_{xx}(\xi,r))\nonumber\\
&=\Tr(O_1)+\Tr(u_{xx}(\xi,r)[\tilde{E}(s)\T{B}-E(s)B]Q[E(s)B]^*)\nonumber\\
&= \Tr\Big(u_{xx}(\xi,r)[\tilde{E}(s)\T{B}-E(s)B]Q[\tilde{E}(s)\T{B}+E(s)B]^*\Big)\label{trrr}\\
&=\Tr\Big([\tilde{E}(s)\T{B}+E(s)B]Q[\tilde{E}(s)\T{B}-E(s)B]^*u_{xx}(\xi,r)\Big)\nonumber\\
&=\Tr\Big(u_{xx}(\xi,r)[\tilde{E}(s)\T{B}+E(s)B]Q[\tilde{E}(s)\T{B}-E(s)B]^*\Big). \label{trr}
\end{align}
Finally, by inserting \eqref{trrr} or \eqref{trr} into \eqref{tr} we
finish the proof.
\end{proof}

\section{Application to the wave equation}\label{sec:hyper}
\subsection{A general error formula}
\label{wave}
In this section we apply the general result from Section \ref{errep}
to the stochastic wave equation \eqref{eq:wave}. As mentioned in the
Introduction the wave equation may be re-written in the form
\eqref{spde}. We start out by making this statement precise. At the
same time we introduce a framework for measuring the regularity of the
solutions and to perform a careful error analysis.  To this aim we let
$\cD$ denote a convex open bounded domain in $\R^d$ with polygonal
boundary $\partial \cD$, and equip $L_2(\cD)$ with the usual norm
$\Lnorm{\cdot}$ and inner product $\Linner{\cdot}{\cdot}$ and let
$\Lambda=-\Delta$ be the Laplace operator with
$\D(\Lambda)=H^2(\cD)\cap H^1_0(\cD)$.  To measure regularity we
introduce a scale of Hilbert spaces. Let
$\{(\lambda_j,\phi_j)\}_{j=1}^\infty$ be eigenpairs of $\Lambda$ with
a nondecreasing sequence of positive eigenvalues $\lambda_j$ and
corresponding orthonormal eigenvectors $\phi_j$. For $\alpha\in
\mathbb{R}$ we endow $\D(\Lambda^{\alpha/2})$ with inner product
\begin{align*}
  \quad\dHinner{ v}{w}{\alpha}
    := \Linner{ \Lambda^{\alpha/2} v}{ \Lambda^{\alpha/2} w}
    = \sum_{j=1}^\infty \lambda_j^\alpha \Linner{ v}{\phi_j}\Linner{ w}{\phi_j},
\end{align*}
and the corresponding norm
$\dHnorm{v}{\alpha}^2=\dHinner{v}{v}{\alpha}$. For $\alpha \geq 0$ the
space $\dH^{\alpha}$ now may be defined through
\begin{equation}\label{eq:hdot}
\dH^{\alpha}:=\{v\in L_2(\cD):\dHnorm{v}{\alpha}<\infty  \}.
\end{equation}
If $\alpha<0$, then $\dH^{\alpha}$ is taken to be the closure of
$L_2(\cD)$ with respect to $\dHnorm{\cdot}{\alpha}$.  It is notable
that with $\alpha>0$ the space $\dH^{-\alpha}$ may be identified with
the dual of $\dH^{\alpha}$ and that $\dH^{\alpha}\subset\dH^{\beta}$
if $\alpha\geq \beta$.  Further, it is known that $\dH^0=L_2(\cD),\
\dH^1=H_0^1(\cD),\ \dH^2=H^2(\cD)\cap H^1_0(\cD)$, see
\cite[Chapt.~3]{Thomeebook}. In addition we define the product space
\begin{equation}
\label{defHalfa}
 \HH^\alpha := \dH^\alpha \times \dH^{\alpha-1},\quad \alpha \in \IR,
\end{equation}
with inner product
\begin{align*}
\langle v,w \rangle_{\HH^\alpha}
=\dHinner{v_1}{w_1}{\alpha}+\dHinner{v_2}{w_2}{\alpha-1},
\end{align*}
where  $v=[v_1,v_2]^T$ and  $w=[w_1,w_2]^T$. The corresponding norms are
\begin{equation*}
  \Hnorm{v}{\alpha}^2\,
    := \dHnorm{ v_1 }{\alpha}^2
      + \dHnorm{ v_2 }{\alpha-1}^2.
\end{equation*}
We take $\cH$ to be the special case of \eqref{defHalfa} when
$\alpha=0$ with norm $\HHnorm{\cdot}=\Hnorm{\cdot}{0}$ and inner
product
$\HHinner{\cdot}{\cdot}=\Linner{\cdot}{\cdot}+\Linner{\Lambda^{-1/2}\cdot}{\Lambda^{-1/2}\cdot}$.
We now regard $\Lambda$ as an operator from $\dH^1$ to $\dH^{-1}$ as
$(\Lambda x)(y)=\langle \nabla x, \nabla y\rangle_{L_2(\D)}$ and let
$A$ and $B$ be defined by
\begin{equation}\label{eq:waveDef}
  A:=\begin{bmatrix}0&-I\\\Lambda&0\end{bmatrix}, \quad
  B:= \begin{bmatrix} 0\\ I \end{bmatrix}.
\end{equation}
Here $B$ is considered as an operator from $\dH^{-1}$ to $\HH$
and the domain of $A$ is
\begin{align*}
 \D(A) = \Big\{ x \in \HH:A x
  =\begin{bmatrix} -x_2\\ \Lambda x_1 \end{bmatrix}
    \in \HH=\dH^0 \times \dH^{-1}\Big\}
       = \HH^1=\dH^1 \times \dH^0.
\end{align*}
We have some freedom in defining $\cU$ but a natural choice is
$\cU=\dH^0=L_2(\cD)$. Thus, we consider the process $\{W(t)\}_{t\geq
  0}$ to be a $Q$-Wiener process in $L_2(\cD)$ and thus $Q$ to be
bounded, selfadjoint and positive semidefinite on $\cU=L_2(\cD)$. Note
that $\dH^0\subset \dH^{-1}$, and therefore $B\in
\mathcal{B}(\cU,\cH)$. It is well known that the operator $-A$ is the
generator of a strongly continuous semigroup $E(t) = \ee^{-tA}$ on
$\HH$, in fact, a unitary group, that can be written as
\begin{align}
\label{eq:exgroup}
E(t)=\ee^{-t A}=
  \begin{bmatrix}
   C(t) & \Lambda^{-1/2} S(t)\\
   -\Lambda^{1/2}S(t) & C(t)
   \end{bmatrix},
\end{align}
where $C(t)=\cos(t\Lambda^{1/2})$ and $S(t)=\sin(t\Lambda^{1/2})$ are
the so-called cosine and sine operators.

With these definitions \eqref{spde} becomes the stochastic wave
equation \eqref{eq:wave} with solution $\bm{X_1(t),X_2(t)}^T\in \HH$
if also the initial value $X_0=\bm{X_{0,1},X_{0,2}}^T\in \HH$ and
\eqref{qt} holds.

We are know ready to use the framework set forth in Section
\ref{errep}, starting by assuming as little as possible about the type
of perturbations of $X(t)$. To get a result that enables error
analysis of the full solution $X$, as well as either of the coordinates
$X_1$ or $X_2$, we make three additional, rather weak assumption on the
data of the problem. First, we take $G$ to be the composition of a
function $g \in \Cbb(\mathcal{V},\R)$, where $\mathcal{V}$ is a real
separable Hilbert space, with an operator $L\in \cB(\HH,\VV)$, i.e.,
\begin{equation*}
G(v)=g(Lv),\quad v=[v_1,v_2]^T\in \HH.
\end{equation*}
It is clear that $G\in \Cbb(\HH,\R) $. Typically we have $\VV=\HH$ and
$L=I$, or $\VV=\dH^0$ and $Lv=P^1v=v_1$ (projection to the first
coordinate), or possibly $\VV=\dH^{-1}$ and $Lv=P^2v=v_2$ (projection
to the second coordinate).  Second, since in all interesting cases
the function $\T{X}_0$ will be related to $X_0$ we will assume that
this relation can be described by an operator $\T{P}\in\cB(\HH)$
through
\begin{equation*}
\T{X}_0=\T{P}X_0.
\end{equation*}
In the cases we consider below, $\T{P}$ will be the orthogonal
projection onto a finite element subspace of $\cH$ or the identity
operator but it could also be an interpolation operator. The third
assumption we make is that $\T{B}=\T{P}B$. This is unnecessarily
restrictive in general but it suffices for the purposes of this paper
and it also makes the presentation clearer.  We begin with a technical
result.

\begin{lemma}\label{lem:assumpEquality} If $E(t)$ is given by
  \eqref{eq:exgroup}, then the following four statements are
  equivalent.
\begin{enumerate}
\renewcommand{\theenumi}{\roman{enumi}}
\item \label{itm:P1} $\Tr(\Lambda^{-1/2}Q\Lambda^{-1/2})<\infty$.
\item \label{itm:P1b} $\|\Lambda^{-1/2}Q^{1/2} \|_{\HS}<\infty$.
\item \label{itm:P2} $\|\Lambda^{-1/2}Q\Lambda^{-1/2}\|_{\Tr}<\infty $.
\item \label{itm:P3} $\Tr(\int_0^T E(t)BQB^*E(t)^*\,\dd t)< \infty$
  for some, hence all, $T>0$.
\end{enumerate}
If either of them holds, then
\begin{equation}\label{eq:assumpEquality}
\Tr\Big(\int_0^T E(t)BQB^*E(t)^*\,\dd t\Big)=T\Tr(\Lambda^{-1/2}Q\Lambda^{-1/2}).
\end{equation}
\end{lemma}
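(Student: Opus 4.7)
The plan is to split the proof into two parts: first handle the equivalences of (i), (ii), and (iii) by appealing directly to the trace identities already collected in Section \ref{pre}, and then establish (iv) together with the identity \eqref{eq:assumpEquality} by an explicit spectral computation using the representation \eqref{eq:exgroup} of $E(t)$.

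For the first part, observe that $\Lambda^{-1/2}Q\Lambda^{-1/2}$ is selfadjoint and positive semidefinite, so \eqref{eq:trace1} holds with equality; this gives (i) $\Leftrightarrow$ (iii) at once. Setting $T=\Lambda^{-1/2}Q^{1/2}\colon L_2(\cD)\to L_2(\cD)$, one has $TT^*=\Lambda^{-1/2}Q\Lambda^{-1/2}$, so \eqref{eq:TrHS0} yields $\|T\|_{\HS}^2=\Tr(TT^*)$, which gives (i) $\Leftrightarrow$ (ii).

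For (iv) and the identity, I would first compute $E(t)B$ explicitly. With $B=[0,I]^T\colon L_2(\cD)\to\cH$ and \eqref{eq:exgroup}, one obtains
\begin{equation*}
E(t)B\,u=\bm{\Lambda^{-1/2}S(t)u\\ C(t)u},\qquad u\in L_2(\cD).
\end{equation*}
Using the definition of the norm on $\cH=\dH^0\times\dH^{-1}$ and expanding $u$ in the eigenbasis $\{\phi_j\}$ of $\Lambda$, the pointwise identity $\sin^2(t\lambda_j^{1/2})+\cos^2(t\lambda_j^{1/2})=1$ gives
\begin{equation*}
\|E(t)B\,u\|_{\cH}^2
=\|\Lambda^{-1/2}S(t)u\|_{L_2}^2+\|\Lambda^{-1/2}C(t)u\|_{L_2}^2
=\|\Lambda^{-1/2}u\|_{L_2}^2,
\end{equation*}
independent of $t$. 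Summing over any orthonormal basis $\{e_k\}$ of $L_2(\cD)$ after applying $u=Q^{1/2}e_k$ shows
\begin{equation*}
\|E(t)BQ^{1/2}\|_{\HS(L_2(\cD),\cH)}^2=\|\Lambda^{-1/2}Q^{1/2}\|_{\HS}^2
\end{equation*}
for every $t\in[0,T]$, and the left-hand side equals $\Tr\big(E(t)BQB^*E(t)^*\big)$ by \eqref{eq:TrHS0}.

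To finish, I would integrate this pointwise equality over $[0,T]$ and interchange the trace with the integral; since the integrand is a measurable family of positive selfadjoint operators with constant trace, Tonelli applied basis-wise (or equivalently Bochner integrability in $\cL_1(\cH)$) legitimizes the exchange and yields
\begin{equation*}
\Tr\Big(\int_0^T E(t)BQB^*E(t)^*\,\dd t\Big)
=T\,\|\Lambda^{-1/2}Q^{1/2}\|_{\HS}^2
=T\,\Tr(\Lambda^{-1/2}Q\Lambda^{-1/2}),
\end{equation*}
where the last equality is step (i) $\Leftrightarrow$ (ii). This identity, valid in $[0,\infty]$, simultaneously proves (iv) $\Leftrightarrow$ (i) and \eqref{eq:assumpEquality}. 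The only real technical point is the trace/integral interchange, which is routine given the positivity of the integrand; no substantive obstacle arises.
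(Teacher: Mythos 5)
Your proposal is correct and follows essentially the same route as the paper: both arguments reduce the trace of the integrand to $\|\Lambda^{-1/2}Q^{1/2}\|_{\HS}^2$ (constant in $t$) by exploiting the unitarity of $E(t)$ — the paper by cycling the trace and using $E(t)^*E(t)=I$ abstractly, you by expanding $E(t)B$ in the eigenbasis and invoking $\sin^2+\cos^2=1$, which is the same fact — and then integrate, with the trace/integral interchange justified by positivity. The handling of (i) $\Leftrightarrow$ (ii) $\Leftrightarrow$ (iii) via \eqref{eq:trace1} with equality and \eqref{eq:TrHS0} likewise matches the paper's argument.
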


\begin{proof}
The operator $\Lambda^{-1/2}Q\Lambda^{-1/2}$ is selfadjoint and
positive semidefinite on $\dot{H}^0$. Hence, if the trace is finite
it equals the trace norm and the other way around. This implies that
\eqref{itm:P1} $\Leftrightarrow$ \eqref{itm:P2}. We know assume that
\eqref{itm:P3} holds. By monotone convergence,
\begin{equation}\label{eq:trFub}
\Tr\Big(\int_0^T E(t)BQB^*E(t)^*\,\dd t\Big)=\int_0^T\Tr\Big( E(t)BQB^*E(t)^*\Big)\,\dd t
\end{equation}
 and by \eqref{eq:TrHS0} and since $E(t)^*=E(t)^{-1}$ we have
\begin{align*}
&\Tr_{\HH}( E(t)BQB^*E(t)^*)
=\Tr_{\dH^0}(Q^{1/2}B^*E(t)^* E(t)BQ^{1/2})
=\Tr_{\dH^{0}}(Q^{1/2}B^*BQ^{1/2})
\\ & \qquad
=\|BQ^{1/2}\|^2_{\cL_2(\dH^0,\HH)}=
\|[0,Q^{1/2}]^T\|^2_{\cL_2(\dH^0,\HH)}
=\|Q^{1/2}\|^2_{\cL_2(\dH^0,\dH^{-1})}
\\ &\qquad
=\|\Lambda^{-1/2}Q^{1/2}\|^2_{\cL_2(\dH^0)}
=\Tr_{\dH^0}(Q^{1/2}\Lambda^{-1}Q^{1/2})=\Tr_{\dH^0}(\Lambda^{-1/2}Q\Lambda^{-1/2}).
\end{align*}
Therefore, it follows that the integrand on the right-hand side of \eqref{eq:trFub} is
constant and is equal to $\Tr_{\dH^0}(\Lambda^{-1/2}Q\Lambda^{-1/2})$,
which implies \eqref{eq:assumpEquality}. Therefore \eqref{itm:P1} must be
true. This argument is reversible so \eqref{itm:P1}
$\Leftrightarrow$ \eqref{itm:P3}. But from this computation it is also
evident that
$$\|\Lambda^{-1/2}Q^{1/2}\|^2_\HS=\Tr(\Lambda^{-1/2}Q\Lambda^{-1/2});$$
that is, that \eqref{itm:P1} $\Leftrightarrow$ \eqref{itm:P1b}.
\end{proof}

\begin{corollary}\label{cor:Qassump}
  If $\|\Lambda^{\beta-1/2}Q\Lambda^{-1/2}\|_{\Tr}<\infty$ for some
  $\beta\geq 0$, then \eqref{itm:P3} holds and \eqref{spde} has a unique weak solution $\{X(t)\}_{t\in [0,T]}$.
\end{corollary}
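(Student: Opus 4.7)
The plan is to reduce the hypothesis to condition \eqref{itm:P2} of Lemma \ref{lem:assumpEquality} and then invoke that lemma together with the standard existence theory from \cite[Chapter 5]{MR1207136}.

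First I would observe that for $\beta\ge 0$ the operator $\Lambda^{-\beta}$ is bounded on $\dH^0$ with operator norm $\lambda_1^{-\beta}$, since $\Lambda$ has the spectral representation with eigenvalues $\lambda_j$ bounded below by the positive number $\lambda_1$. Writing
\begin{equation*}
\Lambda^{-1/2}Q\Lambda^{-1/2}=\Lambda^{-\beta}\bigl(\Lambda^{\beta-1/2}Q\Lambda^{-1/2}\bigr)
\end{equation*}
and applying the operator ideal property \eqref{eq:TrHS2} with $p=1$, we obtain
\begin{equation*}
\|\Lambda^{-1/2}Q\Lambda^{-1/2}\|_{\Tr}
\le \|\Lambda^{-\beta}\|_{\cB(\dH^0)}\,\|\Lambda^{\beta-1/2}Q\Lambda^{-1/2}\|_{\Tr}<\infty,
\end{equation*}
which is precisely condition \eqref{itm:P2} of Lemma \ref{lem:assumpEquality}.

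By Lemma \ref{lem:assumpEquality}, this in turn implies condition \eqref{itm:P3}, namely that $\Tr\bigl(\int_0^T E(t)BQB^*E(t)^*\,\dd t\bigr)<\infty$ for every $T>0$, which is the hypothesis \eqref{qt}. Since $-A$ generates a strongly continuous semigroup on $\cH$ and $X_0$ is $\cF_0$-measurable with finite mean, the standard result \cite[Chapter 5]{MR1207136} cited below equation \eqref{qt} then yields the existence of a unique weak solution $\{X(t)\}_{t\in[0,T]}$ of \eqref{spde}, represented by the stochastic convolution \eqref{eq:X}.

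There is essentially no hard step here; the corollary is a direct combination of the operator ideal inequality with Lemma \ref{lem:assumpEquality}, the only point to verify being the boundedness of $\Lambda^{-\beta}$ on $\dH^0$, which follows from the assumption $\beta\ge 0$ and the positivity of the spectrum of $\Lambda$.
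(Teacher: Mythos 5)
Your proof is correct and follows essentially the same route as the paper: factor $\Lambda^{-1/2}Q\Lambda^{-1/2}=\Lambda^{-\beta}\bigl(\Lambda^{\beta-1/2}Q\Lambda^{-1/2}\bigr)$, apply the ideal property \eqref{eq:TrHS2} with the boundedness of $\Lambda^{-\beta}$ to verify condition \eqref{itm:P2}, and then invoke Lemma \ref{lem:assumpEquality} together with the existence theory of \cite{MR1207136}. The only cosmetic difference is that you spell out why $\Lambda^{-\beta}$ is bounded, which the paper leaves implicit.
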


\begin{proof}
By \eqref{eq:trace1} and \eqref{eq:TrHS2} we have that
\begin{align*}
|\Tr(\Lambda^{-1/2}Q\Lambda^{-1/2})|
&
= \|\Lambda^{-1/2}Q\Lambda^{-1/2}\|_{\Tr}
= \|\Lambda^{-\beta}\Lambda^{\beta-1/2}Q\Lambda^{-1/2}\|_{\Tr}\\
& \le \|\Lambda^{-\beta}\|_{\mathcal{B}(\dH^0)} \|\Lambda^{\beta-1/2}Q\Lambda^{-1/2}\|_{\Tr}.
\end{align*}
The result now follows by Lemma \ref{lem:assumpEquality}.
\end{proof}

Since $\|\Lambda^{\frac{\beta-1}2}Q^{\frac12}\|_{\HS}^2\le
\|\Lambda^{\beta-1/2}Q\Lambda^{-1/2}\|_{\Tr} $ by \eqref{c2}, we may
conclude that under the assumption in the previous corollary, we actually
have mean-square regularity of order $\beta$.  This follows
from the following theorem, which is quoted from \cite[Theorem
3.1]{KLS}.

\begin{theorem}\label{cor:QassumpA}
  If $\|\Lambda^{\frac{\beta-1}2}Q^{\frac12}\|_{\HS}^2<\infty$,
  $X_0\in L_2(\Omega,\cH^\beta)$ for some $\beta\geq 0$ and $A,B$ as
  in \eqref{eq:waveDef}, then the mild solution \eqref{eq:X} satisfies
  \begin{align*}
    \|X(t)\|_{L_2(\Omega,\cH^\beta)}
   \le C\big( \|X_0\|_{L_2(\Omega,\cH^\beta)} +
   T^{1/2} \|\Lambda^{\frac{\beta-1}2}Q^{\frac12}\|_{\HS}\big).
  \end{align*}
\end{theorem}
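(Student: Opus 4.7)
The plan is to decompose the mild solution \eqref{eq:X} as $X(t)=E(t)X_0+\int_0^tE(t-s)B\,\dd W(s)$, apply Minkowski's inequality in $L_2(\Omega,\cH^\beta)$, and estimate the two summands separately. The pivotal structural fact behind both estimates is that the group $\{E(t)\}_{t\in\R}$ given by \eqref{eq:exgroup} is an isometry not only on $\cH=\cH^0$ but on every $\cH^\beta$, $\beta\in\R$.

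To verify the isometry, I would compute $\|E(t)v\|_{\cH^\beta}^2$ directly from the block form \eqref{eq:exgroup}, setting $a=\Lambda^{\beta/2}v_1$ and $b=\Lambda^{(\beta-1)/2}v_2$. Since $C(t)=\cos(t\Lambda^{1/2})$ and $S(t)=\sin(t\Lambda^{1/2})$ are selfadjoint and commute with every power of $\Lambda$, the cross terms that appear after expanding the two squared components cancel, and what remains equals
\[\la C(t)^2 a,a\ra+\la S(t)^2 a,a\ra+\la C(t)^2 b,b\ra+\la S(t)^2 b,b\ra=\|a\|^2+\|b\|^2=\|v\|_{\cH^\beta}^2\]
by the spectral identity $C(t)^2+S(t)^2=I$. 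In particular $\|E(t)X_0\|_{L_2(\Omega,\cH^\beta)}=\|X_0\|_{L_2(\Omega,\cH^\beta)}$, which disposes of the first summand.

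For the stochastic convolution I would invoke It\^o's isometry \eqref{eq:Itoiso}, interpreted with values in $\cH^\beta$, to obtain
\[\EE\Big\|\int_0^tE(t-s)B\,\dd W(s)\Big\|_{\cH^\beta}^2=\int_0^t\|E(t-s)BQ^{1/2}\|_{\cL_2(\dH^0,\cH^\beta)}^2\,\dd s.\]
Expanding the Hilbert-Schmidt norm along an orthonormal basis of $\dH^0$ and applying the $\cH^\beta$-isometry of $E(t-s)$ termwise reduces the integrand to $\|BQ^{1/2}\|_{\cL_2(\dH^0,\cH^\beta)}^2$, independently of $s$. Since $B=[0,I]^T$ sends $v\in\dH^0$ to $[0,v]^T\in\cH^\beta=\dH^\beta\times\dH^{\beta-1}$, this Hilbert-Schmidt norm equals $\|Q^{1/2}\|_{\cL_2(\dH^0,\dH^{\beta-1})}^2=\|\Lambda^{(\beta-1)/2}Q^{1/2}\|_{\HS}^2$. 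Integration over $[0,t]\subset[0,T]$ contributes a factor $t\le T$ before the square root, and combining with Minkowski's inequality gives the desired bound.

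The main obstacle is really a bookkeeping issue rather than a deep analytic one: one must check carefully that $C(t)$, $S(t)$ and $\Lambda^{\pm 1/2}$ extend consistently to the whole scale $\{\dH^\alpha\}_{\alpha\in\R}$ and that $C(t)^2+S(t)^2=I$ remains valid there, so that the isometry computation is genuinely justified on $\cH^\beta$ for $\beta\ne 0$. Once this is granted, everything collapses to It\^o's isometry and the elementary Hilbert-Schmidt identities collected in Section~\ref{pre}.
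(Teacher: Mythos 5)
Your argument is correct, and it is the standard one: the unitarity of $E(t)$ on every $\cH^\beta$ (via the cancellation of cross terms and $C(t)^2+S(t)^2=I$), the It\^o isometry in $\cH^\beta$, and the identity $\|BQ^{1/2}\|_{\cL_2(\dH^0,\cH^\beta)}=\|\Lambda^{(\beta-1)/2}Q^{1/2}\|_{\HS}$ yield the bound with constant $C=1$. Note that the paper itself gives no proof of this statement but quotes it from \cite[Theorem 3.1]{KLS}; your proof is essentially the expected one from that reference, so there is nothing to compare beyond observing that your reasoning is complete and self-contained.
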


We will need the following result on the H\"older continuity of
$E(t)$. It will put an ultimate limit on the convergence rate that one
can achieve with respect to time-stepping.

\begin{lemma}\label{lem:holderCont}
If $\{E(t)\}_{t\geq 0}$ is the semigroup in \eqref{eq:exgroup}, then
\begin{equation*}
\|(E(t)-E(s))x\|\leq C|t-s|^\alpha\|x\|_{\HH^\alpha},\quad
x\in\HH^\alpha,\ t,s\geq 0,~\alpha\in[0,1].
\end{equation*}
\end{lemma}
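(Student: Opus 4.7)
The plan is to exploit the explicit block form of $E(t)$ in \eqref{eq:exgroup} and reduce the bound to scalar estimates on differences of $\cos(t\sqrt{\lambda_j})$ and $\sin(t\sqrt{\lambda_j})$, which can then be summed against the spectral decomposition of $\Lambda$. Writing $x=[x_1,x_2]^T$ and using $\HHnorm{v}^2=\dHnorm{v_1}{0}^2+\dHnorm{v_2}{-1}^2$, the identity
\[
(E(t)-E(s))x=\bm{(C(t)-C(s))x_1+\Lambda^{-1/2}(S(t)-S(s))x_2\\-\Lambda^{1/2}(S(t)-S(s))x_1+(C(t)-C(s))x_2}
\]
together with $\|a+b\|^2\le 2(\|a\|^2+\|b\|^2)$ reduces the problem to four terms, namely $\dHnorm{(C(t)-C(s))x_1}{0}^2$, $\dHnorm{\Lambda^{-1/2}(S(t)-S(s))x_2}{0}^2$, $\dHnorm{(S(t)-S(s))x_1}{0}^2$ (after the cancellation $\Lambda^{-1/2}\Lambda^{1/2}=I$), and $\dHnorm{\Lambda^{-1/2}(C(t)-C(s))x_2}{0}^2$.

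Next I would establish the elementary scalar estimate
\[|\cos a-\cos b|\le\min(2,|a-b|)\le 2^{1-\alpha}|a-b|^\alpha,\qquad a,b\in\R,\ \alpha\in[0,1],\]
and the same bound for $\sin$. Evaluated at $a=t\sqrt{\lambda_j}$, $b=s\sqrt{\lambda_j}$, these give $|\cos(t\sqrt{\lambda_j})-\cos(s\sqrt{\lambda_j})|\le C|t-s|^\alpha\lambda_j^{\alpha/2}$, and likewise for sine. Diagonalizing in the eigenbasis $\{\phi_j\}$, one finds for example
\[\dHnorm{(C(t)-C(s))x_1}{0}^2=\sum_j|\cos(t\sqrt{\lambda_j})-\cos(s\sqrt{\lambda_j})|^2|\Linner{x_1}{\phi_j}|^2\le C|t-s|^{2\alpha}\dHnorm{x_1}{\alpha}^2,\]
and
\[\dHnorm{\Lambda^{-1/2}(S(t)-S(s))x_2}{0}^2=\sum_j\lambda_j^{-1}|\sin(t\sqrt{\lambda_j})-\sin(s\sqrt{\lambda_j})|^2|\Linner{x_2}{\phi_j}|^2\le C|t-s|^{2\alpha}\dHnorm{x_2}{\alpha-1}^2,\]
with entirely analogous estimates for the remaining two terms. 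Summing the four contributions gives $C|t-s|^{2\alpha}\bigl(\dHnorm{x_1}{\alpha}^2+\dHnorm{x_2}{\alpha-1}^2\bigr)=C|t-s|^{2\alpha}\Hnorm{x}{\alpha}^2$, as claimed.

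The only real bookkeeping point in executing this plan, and therefore the main possibility for error, is ensuring that the powers of $\Lambda$ accumulated in each of the four summands match exactly the weight prescribed by the norm on $\HH^\alpha$: the $\Lambda^{1/2}$ appearing in the $(2,1)$ entry of $E(t)-E(s)$ must be absorbed by the $\Lambda^{-1/2}$ in the $\dH^{-1}$-factor of $\HHnorm{\cdot}$, while in the $(1,2)$ block the two $\Lambda^{-1/2}$ factors together with the $\lambda_j^{\alpha}$ gained from the scalar sine estimate must combine precisely into the $\dH^{\alpha-1}$ weight on $x_2$. Once this accounting is checked, no further tools (no interpolation theory, no differentiability of $E$) are required, and the argument is uniform over $\alpha\in[0,1]$ and $t,s\ge 0$.
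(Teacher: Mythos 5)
Your proof is correct, and it takes a genuinely different (and arguably cleaner) route than the paper. The paper proves only the endpoint cases: $\alpha=0$ by boundedness of the unitary group, and $\alpha=1$ by the same block decomposition you use together with the Lipschitz bound $|\sin(ty)-\sin(sy)|\le|t-s||y|$ summed over the eigenbasis; the intermediate range $\alpha\in(0,1)$ is then dispatched with the single phrase ``follows by interpolation.'' You instead prove all $\alpha\in[0,1]$ simultaneously by interpolating at the \emph{scalar} level, via $|\cos a-\cos b|\le\min(2,|a-b|)\le 2^{1-\alpha}|a-b|^{\alpha}$ (which is easily checked by splitting into the cases $|a-b|\le 2$ and $|a-b|>2$), and then summing the resulting bound $C|t-s|^{\alpha}\lambda_j^{\alpha/2}$ against the spectral weights. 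Your bookkeeping of the powers of $\Lambda$ in the four summands is exactly right: the $(2,1)$ entry's $\Lambda^{1/2}$ is absorbed by the $\dH^{-1}$ weight, and the $(1,2)$ entry's $\Lambda^{-1/2}$ combines with $\lambda_j^{\alpha}$ to give precisely $\dHnorm{x_2}{\alpha-1}^2$. What your approach buys is self-containedness and uniformity: you need no interpolation theory for the spaces $\HH^\alpha$ or for operators between them, and the constant is manifestly uniform over all $t,s\ge 0$ (as the lemma asserts) rather than tied to a fixed interval $[0,T]$ as the paper's phrasing of the $\alpha=1$ step might suggest. What the paper's approach buys is brevity and reusability of the interpolation device, which it invokes again later (e.g.\ in Lemma \ref{lem:supE} and Corollary \ref{cor:BakBra}). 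No gap; your argument is complete once the elementary scalar inequality is verified.
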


\begin{proof}
The operator $E(t)$ is bounded on $\HH$ so the statement is true for
$\alpha=0$. Let $\alpha=1$ and $ x=[x_1,x_2]^T\in \HH^1$. Then
\begin{equation*}
\begin{split}
\|(E(t)-E(s))x\|^2
&=\|(C(t)-C(s))x_1+(S(t)-S(s))\Lambda^{-{1/2}}x_2\|^2_{\dH^0}\\
& \quad
+\|(S(s)-S(t))\Lambda^{1/2}x_1+(C(t)-C(s))x_2\|^2_{\dH^{-1}}=:A_1+A_2.
\end{split}
\end{equation*}
By the triangle inequality and the definition of the norm on
$\dH^{-1}$, we have for the last term that
\begin{equation*}
\begin{aligned}
A_2
&\leq 2\|\Lambda^{-1/2}(S(t)-S(s))\Lambda^{1/2}x_1\|^2_{\dH^0}
+2\|\Lambda^{-1/2}(C(t)-C(s))x_2\|^2_{\dH^{0}}\\
&= 2\|(S(t)-S(s))x_1\|^2_{\dH^0}+2\|(C(t)-C(s))\Lambda^{-{1/2}}x_2\|^2_{\dH^{0}}.
\end{aligned}
\end{equation*}
Since $x_2\in \dH^{0}$ it follows that $\Lambda^{-1/2}x_2\in
\dH^{1}$. Hence we only need to investigate the H\"older continuity of
$C$ and $S$ as functions from $[0,T]$ to $\cB(\dH^{1},\dH^{0})$. To
this aim we note that for real $y$ the inequality
\begin{equation}\label{eq:trigBnd}
|\sin(ty)-\sin(sy)|\leq |t-s||y|
\end{equation}
holds. It follows that for $\xi \in \dH^1$ we have
$\|(S(t)-S(s))\xi\|_{\dH^0}\leq|t-s|\|\xi\|_{\dH^1}$. Indeed,
\begin{equation*}
\begin{split}
\|(S(t)-S(s))\xi\|_{\dH^0}^2
&= \sum_{j=1}^\infty\la(S(t)-S(s))\xi,\phi_j\ra_{\dH^0}^2\\
&=\sum_{j=1}^\infty(\sin(t\lambda_j^{1/2})-\sin(s\lambda_j^{1/2}))^2\la\xi,\phi_j\ra_{\dH^0}^2\\
&\leq \sum_{j=1}^\infty(t-s)^2\lambda_j\la\xi,\phi_j\ra_{\dH^0}^2=(t-s)^2\dHnorm{\xi}{1}^2.
\end{split}
\end{equation*}
The inequality \eqref{eq:trigBnd} holds also with $\sin$ replaced by
$\cos$. Thus the statement of the lemma holds also for
$\alpha=1$. The intermediate case follows by interpolation.
\end{proof}

We are now ready to prove a weak error bound for perturbations
of the stochastic wave equation.

\begin{theorem}\label{thm:waveGen}
  Assume that $\{X(t)\}_{t\in[0,T]}$ is the mild solution \eqref{eq:X}
  of the stochastic wave equation \eqref{spde} with
  $\norm{\Tr}{\Lambda^{\beta-1/2}Q\Lambda^{-1/2}}<\infty$ and $X_0\in
  L_1(\Omega,\HH^{2\beta})$ for some $\beta\geq0$. Assume further that
  $\T{X}(T)$ can be represented as $\T{X}(T)=\T{Y}(T)$, where
  $\T{Y}(t)$ is given by \eqref{eq:tildeY2} with $\T{X}_0=\T{P}X_0,\
  \T{P}\in \cB(\HH)$ and $\T{B}=\T{P}B$ and such that
  \eqref{qtdiscrete} holds. Let $g \in \Cbb(\VV,\R) $ and $L\in
  \cB(\cH,\VV)$.  Define
\begin{align}
\label{eq:finTrA}
K_1&:=\sup_{t\in[0,T]}\norm{\cH}{\T{E}(t)\T{P}} , \\
\label{eq:finTr}
K_2&:=\norm{\Tr}{\Lambda^{\beta-1/2}Q\Lambda^{-1/2}}.
\end{align}
Then there is
$C=C(T,\|L\|_{\cB(\cH,\VV)},\|X_0\|_{L_1(\Omega,\HH^{2\beta})},\|g\|_{\Cbb},K_1,K_2)$
such that
\begin{equation}\label{eq:weakErrGen}
\Big|\bE\left(g(L\T{X}(T))-g(LX(T))\right)\Big|
\leq C \sup_{t\in[0,T]}\norm{\cB(\HH^{2\beta},\VV)}{L(\tilde{E}(t)\T{P}-E(t))}.
\end{equation}
\end{theorem}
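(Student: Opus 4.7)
The plan is to apply the abstract error representation in Theorem \ref{main} to the composite test function $G(v):=g(Lv)$ and to estimate the two resulting terms in \eqref{e1} separately. By the chain rule $G'(v)=L^*g'(Lv)$ and $G''(v)=L^*g''(Lv)L$, so $G\in\Cbb(\cH,\R)$ with seminorm controlled by $\|L\|_{\cB(\cH,\VV)}$ and $\|g\|_{\Cbb(\VV,\R)}$. The hypothesis $K_2<\infty$ yields \eqref{qt} via Corollary \ref{cor:Qassump}, and \eqref{qtdiscrete} is assumed, so Theorem \ref{main} is available and delivers the representation \eqref{e1}.

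For the boundary term in \eqref{e1}, observe that $\tilde{Y}(0)-Y(0)=(\tilde{E}(T)\tilde{P}-E(T))X_0$. Using $u_x(\xi,0)=L^*\bE(g'(LZ(T,0,\xi)))$ and moving $L^*$ across the $\cH$-inner product, the latter becomes a $\VV$-inner product against $L(\tilde{E}(T)\tilde{P}-E(T))X_0$; the uniform bound $\sup_\eta\|g'(\eta)\|_\VV\le\|g\|_{\Cbb}$ then yields a bound by $\|g\|_{\Cbb}\,\|L(\tilde{E}(T)\tilde{P}-E(T))\|_{\cB(\HH^{2\beta},\VV)}\,\|X_0\|_{L_1(\Omega,\HH^{2\beta})}$, which already has the shape on the right of \eqref{eq:weakErrGen}.

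For the trace integral in \eqref{e1}, the chain rule identity $u_{xx}(\xi,t)=L^*\bE(g''(LZ(T,t,\xi)))L$ together with cyclicity of the trace and $|\Tr(AB)|\le\|A\|_{\cB}\|B\|_{\Tr}$ reduces the task to bounding $\|L\mathcal{O}(t)L^*\|_{\Tr}$. Using $\tilde{B}=\tilde{P}B$ and the form \eqref{eq:F2}, this operator factors as $[L(\tilde{E}(T-t)\tilde{P}-E(T-t))B]\,Q\,[L(\tilde{E}(T-t)\tilde{P}+E(T-t))B]^*$. Splitting $Q=Q^{1/2}Q^{1/2}$ and applying \eqref{eq:TrHS1} bounds the trace norm by the product of two Hilbert-Schmidt norms: a \emph{sum} factor and a \emph{difference} factor. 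The sum factor is estimated by pulling out $\|L\|_{\cB}(K_1+1)$ by operator boundedness, leaving $\|BQ^{1/2}\|_{\cL_2(\dH^0,\cH)}$, which is finite and dominated by a multiple of $K_2^{1/2}$ via Lemma \ref{lem:assumpEquality} and Corollary \ref{cor:Qassump}. For the difference factor we extract $\|L(\tilde{E}(T-t)\tilde{P}-E(T-t))\|_{\cB(\HH^{2\beta},\VV)}$ and are left with $\|BQ^{1/2}\|_{\cL_2(\dH^0,\HH^{2\beta})}$, which is controlled in terms of $K_2$ by Theorem \ref{thm:aq}.

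Integration in $t\in[0,T]$ and assembly of the two contributions give \eqref{eq:weakErrGen} with the claimed dependence of $C$. The main technical obstacle is the asymmetric distribution of the regularity weight $\HH^{2\beta}$ across the two Hilbert-Schmidt factors: the full weight must load the difference side while the sum side uses only plain operator boundedness and the basic trace finiteness provided by Lemma \ref{lem:assumpEquality}. This imbalance is precisely the mechanism that, once a concrete time-stepping scheme is substituted in Subsections \ref{sec:semitime}--\ref{timespace}, will cause the weak convergence rate to be twice the strong one.
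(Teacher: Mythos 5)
Your overall strategy -- apply Theorem \ref{main} to $G=g\circ L$, compute $G'=L^*g'(L\cdot)$, $G''=L^*g''(L\cdot)L$, and reduce both terms of \eqref{e1} to the deterministic quantity $\|L(\T{E}(t)\T{P}-E(t))\|_{\cB(\HH^{2\beta},\VV)}$ -- is the paper's, and your treatment of the initial-data term is correct. The gap is in the trace term. Writing $\mathcal{O}^{\pm}(t)=(\T{E}(T-t)\T{P}\pm E(T-t))B$, you split $Q=Q^{1/2}Q^{1/2}$ and invoke \eqref{eq:TrHS1}, which forces a Hilbert--Schmidt factor $\|L\mathcal{O}^-(t)Q^{1/2}\|_{\HS}$ on the difference side; after extracting $\|L(\T{E}(T-t)\T{P}-E(T-t))\|_{\cB(\HH^{2\beta},\VV)}$ you are left with $\|BQ^{1/2}\|_{\cL_2(\dH^0,\HH^{2\beta})}=\|\Lambda^{\beta-1/2}Q^{1/2}\|_{\HS}$. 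This quantity is \emph{not} controlled by $K_2$: Theorem \ref{thm:aq} with $s=2\beta-1$ gives only $\|\Lambda^{\beta-1/2}Q^{1/2}\|_{\HS}^2\le\|\Lambda^{2\beta-1/2}Q\Lambda^{-1/2}\|_{\Tr}$, i.e.\ the hypothesis with $\beta$ replaced by $2\beta$. What $K_2<\infty$ actually yields (via $s=\beta-1$ in \eqref{c2}) is $\|\Lambda^{(\beta-1)/2}Q^{1/2}\|_{\HS}^2\le K_2$, which would only let you put the weight $\HH^{\beta}$ -- not $\HH^{2\beta}$ -- on the difference factor, reproducing the strong rate $\beta$ instead of the weak rate $2\beta$. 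Concretely, for $Q=I$ in $d=1$ one has $K_2<\infty$ for all $\beta<\tfrac12$, while $\|\Lambda^{\beta-1/2}Q^{1/2}\|_{\HS}<\infty$ requires $\beta<\tfrac14$, so your chain of estimates fails on the range $\beta\in[\tfrac14,\tfrac12)$ where the theorem still has content.

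The repair -- and the paper's actual argument -- is to never factor $Q$ into square roots. Keep the whole product inside one trace norm and peel off bounded operators with \eqref{eq:TrHS2}: insert $\Lambda^{1/2}\Lambda^{-1/2}$ next to $Q$ on one side and, after using $\|T\|_{\Tr}=\|T^*\|_{\Tr}$ to bring the remaining trace-class piece to the form $L\mathcal{O}^-(t)Q\Lambda^{-1/2}$, insert $\Lambda^{1/2-\beta}\Lambda^{\beta-1/2}$ on the other. Then the single trace-class factor $\|\Lambda^{\beta-1/2}Q\Lambda^{-1/2}\|_{\Tr}=K_2$ survives intact, the sum side contributes $\|L\mathcal{O}^+(t)\Lambda^{1/2}\|_{\cB(\dH^0,\VV)}\le\|L\|_{\cB(\cH,\VV)}(K_1+1)$ since $\|B\Lambda^{1/2}\|_{\cB(\dH^0,\cH)}=1$, and the difference side contributes $\|L(\T{E}(T-t)\T{P}-E(T-t))\|_{\cB(\HH^{2\beta},\VV)}$ since $\|B\Lambda^{1/2-\beta}\|_{\cB(\dH^0,\cH^{2\beta})}=1$. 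The asymmetry you correctly identify as the source of the rate doubling must live in the two different powers of $\Lambda$ flanking $Q$ inside a single trace norm; it cannot be realized as an asymmetric distribution over two Hilbert--Schmidt norms of $\Lambda^{(\cdot)}Q^{1/2}$ without strengthening the hypothesis.
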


We want to emphasize that this theorem reduces the problem of proving
weak error estimates for the stochastic wave equation to proving
deterministic error estimates for the approximation of the semigroup
or, to be precise, to find a bound for
\begin{equation*}
\sup_{t\in[0,T]}\norm{\cB(\HH^{2\beta},\VV)}{L(\tilde{E}(t)\T{P}-E(t))}.
\end{equation*}

\begin{proof}
  First note that \eqref{eq:finTr} implies \eqref{qt} by Corollary
  \ref{cor:Qassump} and hence we may use Theorem \ref{main} with
  $G(X)=g(LX)$. To this aim we note that $G'(X)=L^{*}g'(LX)$ and
  $G''(X)=L^{*}g''(LX)L$. The terms in \eqref{e1} will be estimated in
  order of appearance with $\mathcal{O}$ as in \eqref{eq:F2}. For the
  first term, by \eqref{eq:uderiv}, \eqref{eq:ux} and the fact that
  both $Y(0)$ and $\tilde{Y}(0)$ are $\cF_0$-measurable, we have
\begin{align*}
&\Big|\mathbf{E} \Big(\int_0^1
   \Big\langle u_x\big(Y(0)+s(\tilde{Y}(0)-Y(0)),0\big),
    \tilde{Y}(0)-Y(0) \Big\rangle \,\dd s  \Big)\Big|\\
 &\quad=\Big|\bE\Big(\int_0^1
\Big\langle
\bE \Big(
g'\big(LZ(T,0,Y(0)+s(\tilde{Y}(0)-Y(0)))\big)\Big|{\mathcal F}_0\Big),
L(\tilde{Y}(0)-Y(0))\Big\rangle\,\dd s\Big)\Big|\\
 &\quad \le \sup_{x\in \VV}\|g'(x)\| \,
  \mathbf{E}\big(\|L(\tilde{Y}(0)-Y(0))\|\big)\\
 &\quad = \sup_{x\in \VV}\|g'(x)\| \,
  \mathbf{E}\big(\|L(\T{E}(T)\T{X}_0-E(T)X_0)\|\big)\\
 &\quad \leq \|g\|_{\Cbb} \,
  \|L(\T{E}(T)\T{P}-E(T))\|_{\cB(\HH^{2\beta},\VV)}\|X_0\|_{L_1(\Omega,\HH^{2\beta})}.
\end{align*}

For the second term we note that with
\begin{align*}
\mathcal{O}^-(t)&=\tilde{E}(T-t)\T{B}-E(T-t)B=(\tilde{E}(T-t)\T{P}-E(T-t))B,\\
\mathcal{O}^+(t)&=\tilde{E}(T-t)\T{B}+E(T-t)B=(\tilde{E}(T-t)\T{P}+E(T-t))B,
\end{align*}
and by \eqref{eq:uderiv}, \eqref{eq:ux}, we may write
\begin{align*}
&\mathbf{E}\int_0^T\Tr\Big(u_{xx}\big(\tilde{Y}(t),t\big)
\mathcal{O}^+(t)Q\mathcal{O}^-(t)^*\Big)\,\dd t\\
&\qquad
=\mathbf{E}\int_0^T\Tr\Big(\mathbf{E}\big(L^*g''\big(LZ(T,t,\tilde{Y}(t))\big)
L\big|\mathcal{F}_t\big) \mathcal{O}^+(t)Q\mathcal{O}^-(t)^*\Big)\,\dd t.
\end{align*}
Using \eqref{eq:trace2}, \eqref{eq:trace1} and \eqref{eq:TrHS2} we
bound the integrand above as follows:
\begin{align*}
& \Big|\Tr\Big(\mathbf{E}
\big(L^*g''(LZ(T,t,\tilde{Y}(t)))L\big|\mathcal{F}_t\big)
\mathcal{O}^+(t)Q\mathcal{O}^-(t)^*\Big)\Big|\\
&\quad =\Big|\Tr\Big(\mathbf{E}
\big(g''(LZ(T,t,\tilde{Y}(t)))\big|\mathcal{F}_t\big)
L \mathcal{O}^+(t)Q\mathcal{O}^-(t)^*L^*\Big)\Big|\\
&\quad
\leq \sup_{x\in \VV}\|g''(x)\|_{\cB(\VV)}
\| L \mathcal{O}^+(t)Q\mathcal{O}^-(t)^*L^*\|_{\Tr}.
\end{align*}
Here we have $\sup_{x\in \VV}\|g''(x)\|_{\cB(\VV)}\le
\|g\|_{\Cbb} $ and by \eqref{eq:trace0} and \eqref{eq:TrHS2},
\begin{align*}
&\| L \mathcal{O}^+(t)Q\mathcal{O}^-(t)^*L^*\|_{\Tr}
=\| L \mathcal{O}^+(t)\Lambda^{1/2}\Lambda^{-1/2}Q\mathcal{O}^-(t)^*L^*\|_{\Tr}\\
& \quad \le \| L \mathcal{O}^+(t)\Lambda^{1/2}\|_{\cB(\dH^0,\VV)}\,
\| \Lambda^{-1/2}Q\mathcal{O}^-(t)^*L^*\|_{\Tr}\\
& \quad =\| L \mathcal{O}^+(t)\Lambda^{1/2}\|_{\cB(\dH^0,\VV)}\,
\| L\mathcal{O}^-(t)Q\Lambda^{-1/2}\|_{\Tr}\\
& \quad = \| L \mathcal{O}^+(t)\Lambda^{1/2}\|_{\cB(\dH^0,\VV)}\,
\| L\mathcal{O}^-(t)\Lambda^{1/2-\beta}\Lambda^{\beta-1/2}Q\Lambda^{-1/2}\|_{\Tr}\\
& \quad \le \| L \mathcal{O}^+(t)\Lambda^{1/2}\|_{\cB(\dH^0,\VV)}\,
\|L\mathcal{O}^-(t)\Lambda^{1/2-\beta} \,
\|_{\cB(\dH^0,\VV)}\|\Lambda^{\beta-1/2}Q\Lambda^{-1/2}\|_{\Tr}.
\end{align*}
The first factor can be estimated as
\begin{align*}
  &\| L \mathcal{O}^+(t)\Lambda^{1/2}\|_{\cB(\dH^0,\VV)}
  =\| L (\tilde{E}(T-t)\T{P}+E(T-t))
     B\Lambda^{1/2}\|_{\cB(\dH^0,\VV)} \\
&\qquad
\le\|L\|_{\cB(\cH,\VV)}
\big (\|\tilde{E}(T-t)\T{P}\|_{\cB(\cH,\cH)}
+\|E(T-t)\|_{\cB(\cH,\cH)} \big)
\| B \Lambda^{1/2}\|_{\cB(\dH^0,\cH)}\\
& \qquad \le \|L\|_{\cB(\cH,\VV)}(K_1+1),
\end{align*}
because $\|E(s)\|_{\cB(\cH,\cH)} =1=
\| B \Lambda^{1/2}\|_{\cB(\dH^0,\cH)}$.
Similarly, the middle factor may be bounded by
\begin{align*}
\big\|L \big(\tilde{E}(T-t)\T{P}-E(T-t)\big)\big\|_{\cB(\HH^{2\beta},\VV)} \,
\| B \Lambda^{1/2-\beta}\|_{\cB(\dH^0,\cH^{2\beta})},
\end{align*}
where $\| B \Lambda^{1/2-\beta}\|_{\cB(\dH^0,\cH^{2\beta})}=1$.
The third term is $K_2$. Thus, we conclude that
\begin{align*}
&\Big|\mathbf{E}\int_0^T\Tr\Big(u_{xx}\big(\tilde{Y}(t),t\big)
\mathcal{O}^-(t)Q \mathcal{O}^+(t)^*\Big)\,\dd t\Big|\\
&\qquad \leq C\int_0^T
\big\| L \big(\tilde{E}(T-t)\T{P}-E(T-t)\big)\big\|_{\cB(\HH^{2\beta},\VV)}\,\dd t\\
&\qquad \leq C\,T\sup_{t\in[0,T]}
\big\| L \big(\tilde{E}(t)\T{P}-E(t)\big)\big\|_{\cB(\HH^{2\beta},\VV)},
\end{align*}
and the proof is complete.
\end{proof}
\begin{remark}
We briefly comment on the abstract condition $\norm{\Tr}{\Lambda^{\beta-1/2}Q\Lambda^{-1/2}}<\infty$ of Theorem \ref{thm:waveGen}.
If $Q=I$, then we must have $\Tr(\Lambda^{\beta-1})<\infty$ and hence taking the asymptotics of the eigenvalues of $\Lambda$ into account
we conclude that $\frac{2}{d}(\beta-1)<-1$; that is, $\beta<1-\frac{d}{2}$. Hence $d=1$ and $\beta<\frac12$. In general, using \eqref{eq:TrHS2}, we have that
$$
\norm{\Tr}{\Lambda^{\beta-1/2}Q\Lambda^{-1/2}}\le \Tr(\Lambda^{\beta-1/2-s})\|\Lambda^sQ\Lambda^{-\frac12}\|_{\mathcal{B}(\cU)}.
$$
Thus, if $\frac{2}{d}(\beta-\frac12 -s)<-1$; that is, $\beta<s+\frac12 -\frac{d}{2}$, then
$\norm{\Tr}{\Lambda^{\beta-1/2}Q\Lambda^{-1/2}}<\infty$ provided that $\Lambda^sQ\Lambda^{-\frac12}$ is a bounded linear operator on $\cU$.
\end{remark}

%
%
\subsection{Weak convergence of temporally semidiscrete
  schemes} \label{sec:semitime}

We begin by applying Theorem \ref{thm:waveGen} to semidiscrete
approximation schemes where the discretization is with respect to
time. We will use results on so-called $I$-stable rational
approximations considered in \cite{BrennerThomee}. An $I$-stable
rational approximation of order $p$ is a rational function $R$ such
that \eqref{eq:ratApp} holds. A class of such functions are
constructed in \cite{BakBram} and analyzed further in connection to
oscillation equations in \cite{NorsettWanner}. It contains the
implicit Euler method ($p=1$) and, which is important since it
preserves energy for the wave equation, the Crank-Nicolson method
($p=2$).

For these functions the operators $E_k=R(kA)$, $k>0$, are well defined on $\HH$ and they are
contractions, and hence stable, as $-A$ generates a unitary group. Here $k=T/N,\ N\in
\mathbb{N}$, is the time step. We
can approximate the solution of \eqref{spde} on the uniform grid
$t_j=jk,\ j=0,\ldots,N$, by the solution of the
difference equation
\begin{equation}\label{eq:spdeRA}
X^j_k=E_k(X^{j-1}_k+B\Delta W^j),\ j=1,\ldots, N; \quad X^0_k=X_0,
\end{equation}
given by
\begin{equation*}
X^n_k=E_k^nX_0+\sum_{j=1}^{n}E_k^{n-j+1}B\Delta W^{j},\quad n\geq 1,
\end{equation*}
where $\Delta W^j=W(t_{j})-W(t_{j-1})$. We want to define a process
$\{\T{Y}_k(t)\}_{t\in[0,T]}$ of the form \eqref{eq:tildeY2} that is as
close as possible to \eqref{eq:Y} and such that $X_k^N=\T{Y}_k(T)$. To
this aim we first define a new discrete process
\begin{equation}\label{eq:driftfreeDisc}
Y_k^n=E_k^{N-n}X^n_k=E^N_kX_0+\sum_{j=1}^{n}E_k^{N-j+1}B\Delta W^{j}.
\end{equation}
Clearly $Y_k^N=X_k^N$. In order to make a piecewise constant time
interpolation of \eqref{eq:driftfreeDisc} we introduce the time
intervals $I_j=[t_{j-1},t_j)$ for $j=1,\ldots, N$ and
$I_{N+1}=\{t_N\}=\{T\}$. With $\chi$ being the indicator function we
then write
\begin{equation*}
\T{E}_k(T-t)=\sum_{j=1}^{N+1}E_k^{N-j+1}\chi_{I_j}(t).
\end{equation*}
It may easily be checked that this corresponds to writing
\begin{equation*}
\T{E}_k(t)=\sum_{j=0}^{N}E_k^{j}\chi_{\widehat{I}_j}(t)
\end{equation*}
with $\widehat{I}_{0}=\{t_0\}=\{0\}$ and $\widehat{I}_{j}=(t_{j-1},t_j]$ for
$j=1,\ldots, N$. We finally define
\begin{equation} \label{eq:Yktilde}
\T{Y}_k(t):=\T{E}_k(T)X_0+\int_0^t\T{E}_k(T-s)B\,\dd W(s).
\end{equation}
The process $\T{Y}_k$ has the desired properties and, in addition,
$\T{Y}_k(t_n)=Y_k^n$. To apply Theorem \ref{thm:waveGen} it remains to
show that \eqref{qtdiscrete} holds with $\T{E}(t)=\T{E}_k(t)$ and
$\T{P}=I$ (hence $\T{B}=B$). This is indeed the case as soon as we are
guaranteed a weak solution of \eqref{spde}, as stated in the following
lemma.

\begin{lemma}\label{lem:EkBound}
  If $\T{E}(t)=\T{E}_k(t)$ and $\T{B}=B$, then \eqref{qt} implies
  \eqref{qtdiscrete}.
\end{lemma}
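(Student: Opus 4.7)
The plan is to reduce \eqref{qtdiscrete} to \eqref{qt} by exploiting two facts: (i) $\tilde{E}_k(t)$ is piecewise constant in $t$ and, on each interval, equals a power of the contraction $E_k = R(kA)$, and (ii) by Lemma \ref{lem:assumpEquality}, the assumption \eqref{qt} is equivalent to $\|\Lambda^{-1/2}Q^{1/2}\|_{\HS}^2 = \|BQ^{1/2}\|_{\cL_2(\dH^0,\cH)}^2 < \infty$ (the identification of these two Hilbert--Schmidt norms is carried out inside the proof of Lemma \ref{lem:assumpEquality}).

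First, I would use monotone convergence, exactly as in \eqref{eq:trFub} of Lemma \ref{lem:assumpEquality}, to pull the trace inside the time integral:
\begin{equation*}
\Tr\Big(\int_0^T \tilde{E}_k(t)BQB^*\tilde{E}_k(t)^*\,\dd t\Big)
= \int_0^T \Tr\big(\tilde{E}_k(t)BQB^*\tilde{E}_k(t)^*\big)\,\dd t.
\end{equation*}
Then I would apply \eqref{eq:TrHS0} to rewrite the integrand as $\|\tilde{E}_k(t)BQ^{1/2}\|_{\HS}^2$, and use the second inequality in \eqref{eq:TrHS2} (with $p=2$) to pull out the operator norm of $\tilde{E}_k(t)$:
\begin{equation*}
\|\tilde{E}_k(t)BQ^{1/2}\|_{\cL_2(\dH^0,\cH)}
\le \|\tilde{E}_k(t)\|_{\cB(\cH)}\,\|BQ^{1/2}\|_{\cL_2(\dH^0,\cH)}.
\end{equation*}

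The key observation is then that $\|\tilde{E}_k(t)\|_{\cB(\cH)} \le 1$ uniformly in $t \in [0,T]$. This is because, on each interval $\widehat{I}_j$, one has $\tilde{E}_k(t) = E_k^j$, and $E_k = R(kA)$ is a contraction on $\cH$: this is exactly the content of the stability assertion made immediately before \eqref{eq:spdeRA} (coming from the $I$-stability bound $|R(iy)| \le 1$ for $y \in \mathbb{R}$ together with the fact that $-A$ generates a unitary group, so that $A$ is, up to a change of inner product, skew-adjoint and $R(kA)$ is bounded by $\sup_{y\in\mathbb{R}}|R(iy)|$). Hence $\|E_k^j\|_{\cB(\cH)} \le 1$ for every $j \ge 0$.

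Combining these ingredients gives
\begin{equation*}
\Tr\Big(\int_0^T \tilde{E}_k(t)BQB^*\tilde{E}_k(t)^*\,\dd t\Big)
\le T\,\|BQ^{1/2}\|_{\cL_2(\dH^0,\cH)}^2,
\end{equation*}
and by the identification recalled above this last quantity equals $T\,\Tr(\Lambda^{-1/2}Q\Lambda^{-1/2})$, which is finite by Lemma \ref{lem:assumpEquality} under \eqref{qt}. There is no real obstacle here; the only point that requires a moment's thought is the appeal to the contraction property of the rational approximant $R(kA)$, which however is part of the standing hypotheses on $R$.
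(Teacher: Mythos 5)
Your proposal is correct and follows essentially the same route as the paper's own proof: both rest on the contraction property $\|\tilde{E}_k(t)\|_{\cB(\cH)}\le 1$, the identification $\Tr(BQB^*)=\Tr(\Lambda^{-1/2}Q\Lambda^{-1/2})<\infty$ from Lemma \ref{lem:assumpEquality}, and monotone convergence to exchange trace and time integral. The only (cosmetic) difference is that you phrase the pointwise bound via $\|\tilde{E}_k(t)BQ^{1/2}\|_{\HS}^2$ and \eqref{eq:TrHS0}, whereas the paper bounds $\Tr(\tilde{E}_k(t)BQB^*\tilde{E}_k(t)^*)\le\|\tilde{E}_k(t)\|_{\cB(\cH)}^2\|BQB^*\|_{\Tr}$ directly via \eqref{eq:trace1} and \eqref{eq:TrHS2}.
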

\begin{proof}
  If \eqref{qt} holds then the trace of
  $\Lambda^{-1/2}Q\Lambda^{-1/2}$ is finite by Lemma
  \ref{lem:assumpEquality}. Thus, for all $t\ge 0$, using
  \eqref{eq:trace1} and \eqref{eq:TrHS2}, it follows that
 \begin{equation*}
\begin{split}
\Tr(\T{E}_k(t)BQB^*\T{E}_k^*(t))
&\leq \|\T{E}_k(t)\|_{\cB(\cH)}^2 \|BQB^*\|_{\Tr}\\
&\le \|BQB^*\|_{\Tr}= \Tr(BQB^*)=\Tr(\Lambda^{-1/2}Q\Lambda^{-1/2})<\infty,
\end{split}
\end{equation*}
where the last equality is shown in the proof of Lemma
\ref{lem:assumpEquality} as $$\Tr(BQB^*)=\Tr(Q^{1/2}B^*BQ^{1/2})$$ by
\eqref{eq:TrHS0}.  The statement of the lemma follows by the monotone
convergence theorem again as in the proof of Lemma
\ref{lem:assumpEquality}.
\end{proof}

In order to make use of Theorem \ref{thm:waveGen} we need a bound on
$E(t)-\T{E}_k(t)$. The results in \cite{BrennerThomee} are concerned
with the difference at the grid points. With our notation their
conclusion reads that, for $x\in \HH^{p+1}$,
\begin{equation}\label{eq:raterr}
\|(E(t_n)-E_k^n)x\|\leq Ct_nk^p\|x\|_{\HH^{p+1}}.
\end{equation}
As already mentioned, the conditions in \eqref{eq:ratApp} ensures that
the operator $E_k^n$ is a contraction on $\HH$ for any $n\geq 0$, so
\eqref{eq:raterr} can be extended to fractional order by
interpolation, i.e.,
\begin{equation}\label{eq:raterrfrac}
\|(E(t_n)-E_k^n)x\|\leq
Ct_nk^{\alpha\frac{p}{p+1}}\|x\|_{\HH^{\alpha}},
\quad \alpha\in[0,p+1].
\end{equation}
For our purposes, it is not enough to consider only the grid points,
but fortunately a global error estimate follows easily.

\begin{lemma}\label{lem:supE}
  For the operators $\T{E}_k(t)$ and $E(t)$ defined above, we have
  that
\begin{equation*}
\sup_{t\in[0,T]}\|\tilde{E}_k(t)-E(t)\|_{\mathcal{B}(\HH^\alpha,\HH)}
\leq C(T)k^{\min(\alpha\frac{p}{p+1},1)},~k>0,
\end{equation*}
where $p$ is a nonnegative integer as in \eqref{eq:ratApp} and
$\alpha\geq 0$.
\end{lemma}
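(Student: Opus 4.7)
The plan is to reduce the global (uniform in $t$) error to the known grid-point estimate \eqref{eq:raterrfrac} plus the Hölder continuity of the continuous semigroup from Lemma \ref{lem:holderCont}, using the explicit piecewise constant structure
\begin{equation*}
\tilde{E}_k(t) = \sum_{j=0}^{N} E_k^j \chi_{\widehat{I}_j}(t).
\end{equation*}

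First, I fix an arbitrary $t \in [0,T]$ and let $j \in \{0,1,\dots,N\}$ be the unique index with $t \in \widehat{I}_j$, so that $\tilde{E}_k(t) = E_k^j$ and $|t - t_j| \leq k$. The key decomposition is the triangle inequality
\begin{equation*}
\tilde{E}_k(t) - E(t) \;=\; \bigl(E_k^j - E(t_j)\bigr) \;+\; \bigl(E(t_j) - E(t)\bigr),
\end{equation*}
which splits the error into a purely ``temporal discretization'' piece and a ``semigroup continuity'' piece. The bound will then follow by estimating each piece separately in $\cB(\HH^\alpha,\HH)$ and taking the slower of the two rates.

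For the first piece, the interpolated estimate \eqref{eq:raterrfrac} directly yields
\begin{equation*}
\|(E_k^j - E(t_j))x\| \leq C\, t_j\, k^{\alpha\frac{p}{p+1}} \|x\|_{\HH^\alpha}
\leq CT\, k^{\alpha\frac{p}{p+1}} \|x\|_{\HH^\alpha},
\qquad \alpha \in [0,p+1].
\end{equation*}
For $\alpha > p+1$ the continuous embedding $\HH^\alpha \hookrightarrow \HH^{p+1}$ reduces us to the previous case with rate $p \geq 1$, which is already more than we need. For the second piece, Lemma \ref{lem:holderCont} with exponent $\min(\alpha,1)$ gives
\begin{equation*}
\|(E(t_j) - E(t))x\| \leq C |t - t_j|^{\min(\alpha,1)} \|x\|_{\HH^{\min(\alpha,1)}}
\leq C\, k^{\min(\alpha,1)} \|x\|_{\HH^\alpha},
\end{equation*}
where in the last step I used the embedding $\HH^\alpha \hookrightarrow \HH^{\min(\alpha,1)}$.

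Combining these and taking the supremum over $t$, the overall rate is $\min\bigl(\alpha\tfrac{p}{p+1},\min(\alpha,1)\bigr)$. Since $\tfrac{p}{p+1} < 1$ implies $\alpha\tfrac{p}{p+1} \leq \alpha$ for every $\alpha \geq 0$, this minimum simplifies to $\min\bigl(\alpha\tfrac{p}{p+1},1\bigr)$, yielding exactly the claimed bound. I do not expect any serious obstacle: the argument is a routine two-piece decomposition, and the only minor care needed is the case analysis showing that $\min(\alpha\tfrac{p}{p+1},\min(\alpha,1)) = \min(\alpha\tfrac{p}{p+1},1)$, which hinges on the monotonicity just noted.
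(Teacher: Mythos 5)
Your proof is correct and follows essentially the same route as the paper: the decomposition $\tilde{E}_k(t)-E(t)=(E_k^j-E(t_j))+(E(t_j)-E(t))$ on $\widehat{I}_j$, the interpolated rational-approximation bound \eqref{eq:raterrfrac} for the first piece, and Lemma \ref{lem:holderCont} for the second. The only minor point is that summing $k^{a}+k^{b}\le 2k^{\min(a,b)}$ requires $k\le 1$; the paper covers $k>1$ separately by stability of $E_k$ and $E(t)$, which you should add for the stated range $k>0$.
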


\begin{proof}
The statement of the lemma follows from \eqref{eq:raterrfrac} and
Lemma \ref{lem:holderCont}. Indeed, for $t\in \widehat{I}_j$, we have
\begin{equation*}
\tilde{E}_k(t)-E(t)
=\big(\tilde{E}_k(t_j)-E(t_j)\big)+\big(E(t_j)-E(t)\big)
=\big(E_k^j-E(t_j)\big)+\big(E(t_j)-E(t)\big).
\end{equation*}
Hence, with $I=[0,T]$ and $\cI=\{0,1,\ldots,N\}$,
\begin{equation*}
\begin{aligned}
&\sup_{t\in I}\|\tilde{E}_k(t)-E(t)\|_{\mathcal{B}(\HH^\alpha,\HH)}\\
&\qquad
\leq\sup_{j\in\cI}\big(\|E_k^j-E(t_j)\|_{\mathcal{B}(\HH^\alpha,\HH)}
+ \sup_{t\in \widehat{I}_j}\|E(t_j)-E(t)\|_{\mathcal{B}(\HH^\alpha,\HH)}\big)\\
&\qquad \leq C(T)(k^{\min(\alpha\frac{p}{p+1},p)}+k^{\min(\alpha,1)})
\leq C(T)k^{\min(\alpha\frac{p}{p+1},1)},~k\le 1.
\end{aligned}
\end{equation*}
Finally, for $k>1$, the statement follows by stability.
\end{proof}

We are now ready to prove a bound for the weak error of the pure
time-discretization via \eqref{eq:spdeRA} of the stochastic wave
equation.

\begin{theorem}\label{thm:ratBound}
  Assume that $\norm{\Tr}{\Lambda^{\beta-1/2}Q\Lambda^{-1/2}}<\infty$
  and $X_0\in L_1(\Omega,\HH^{2\beta})$ for some $\beta\geq 0$ and
  $G\in \Cbb(\cH,\R)$. Then the weak error of the rational
  approximation algorithm \eqref{eq:spdeRA} of the stochastic wave
  equation described above is bounded by
\begin{equation}
\left|\bE \left(G(X_k^N)-G(X(T))\right)\right|
\leq Ck^{\min(2\beta\frac{p}{p+1},1)},~k>0.
\end{equation}
\end{theorem}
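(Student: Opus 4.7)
My plan is to simply assemble the pieces already established. The approach is to recognize the time-stepping scheme \eqref{eq:spdeRA} as a special case of the abstract setup of Theorem \ref{thm:waveGen}, and then to invoke the deterministic error bound of Lemma \ref{lem:supE}.

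First I would set up the dictionary. Take $L = I$, $\VV = \cH$, $g = G$, $\tilde{P} = I$ (so that $\tilde{B} = \tilde{P}B = B$ and $\tilde{X}_0 = X_0$), and $\tilde{E}(t) = \tilde{E}_k(t)$, the piecewise constant interpolant defined above. By construction, $\tilde{Y}_k(t_n) = Y_k^n$, and in particular $\tilde{Y}_k(T) = Y_k^N = E_k^{N-N} X_k^N = X_k^N$, so the requirement $\tilde{X}(T) = \tilde{Y}(T)$ is met. The hypothesis \eqref{qtdiscrete} holds by Lemma \ref{lem:EkBound} (which uses only that each $E_k^j$ is a contraction, a consequence of the second inequality in \eqref{eq:ratApp}), and the trace-class assumption \eqref{qt} on the original equation follows from the assumption on $Q$ via Corollary \ref{cor:Qassump}.

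Next I would check the boundedness constant $K_1$. Since $\tilde{E}_k(t) = E_k^j$ on $\widehat{I}_j$ and each $E_k^j$ is a contraction on $\cH$, we have $K_1 = \sup_{t \in [0,T]} \|\tilde{E}_k(t)\|_{\cB(\cH)} \leq 1$. The constant $K_2 = \|\Lambda^{\beta-1/2} Q \Lambda^{-1/2}\|_{\Tr}$ is finite by assumption. Theorem \ref{thm:waveGen} then yields
\begin{equation*}
\bigl|\bE\bigl(G(X_k^N) - G(X(T))\bigr)\bigr|
\leq C \sup_{t \in [0,T]} \bigl\|\tilde{E}_k(t) - E(t)\bigr\|_{\cB(\HH^{2\beta}, \cH)}
\end{equation*}
with $C$ depending only on $T$, $\|G\|_{\Cbb}$, $\|X_0\|_{L_1(\Omega, \HH^{2\beta})}$, $K_1$, and $K_2$.

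Finally, I would invoke Lemma \ref{lem:supE} with $\alpha = 2\beta$, which gives
\begin{equation*}
\sup_{t \in [0,T]} \bigl\|\tilde{E}_k(t) - E(t)\bigr\|_{\cB(\HH^{2\beta}, \cH)}
\leq C(T)\, k^{\min(2\beta \frac{p}{p+1}, 1)},
\end{equation*}
and combining these two estimates concludes the proof. There is no real obstacle here — all the work has been front-loaded into the abstract Theorem \ref{main}, its wave-equation specialization Theorem \ref{thm:waveGen}, and the deterministic error analysis of Lemma \ref{lem:supE}; the present theorem is essentially the corollary that results from plugging these together. The only thing worth double-checking is the identification $\tilde{Y}_k(T) = X_k^N$, which is why the piecewise constant interpolation in \eqref{eq:Yktilde} was set up with $\tilde{E}_k(T - t) = E_k^{N-j+1}$ on $I_j$ rather than with a shifted convention.
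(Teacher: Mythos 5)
Your proposal is correct and follows exactly the paper's own argument: apply Theorem \ref{thm:waveGen} with $\tilde{E}(t)=\tilde{E}_k(t)$, $\tilde{P}=I$, $\VV=\cH$, $L=I$, verifying \eqref{qt} via Corollary \ref{cor:Qassump} and \eqref{qtdiscrete} via Lemma \ref{lem:EkBound}, and then invoke Lemma \ref{lem:supE} with $\alpha=2\beta$. The extra checks you include (the contraction bound $K_1\le 1$ and the identification $\tilde{Y}_k(T)=X_k^N$) are consistent with the construction in Subsection \ref{sec:semitime} and only make the argument more explicit.
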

\begin{proof}
  We may use Theorem \ref{thm:waveGen} with $\T{E}(t)=\T{E_k}(t)$,
  $\T{P}=I$, $\VV=\HH$ and $L=I$, because
  $\norm{\Tr}{\Lambda^{\beta-1/2}Q\Lambda^{-1/2}}<\infty$ implies
  \eqref{qtdiscrete} by Corollary \ref{cor:Qassump} and Lemma
  \ref{lem:EkBound}. Thus, our claim follows by applying Lemma
  \ref{lem:supE} with $\alpha=2\beta$ to \eqref{eq:weakErrGen}.
\end{proof}
%
%
%
%
\subsection{Weak convergence of fully discrete schemes}\label{timespace}
In this section we will present an error estimate for a fully discrete
scheme. We will borrow the setting from \cite{BakBram}, where estimates
for the deterministic wave equation are proved. The spatial
discretization is performed by a standard continuous finite element
method and the time discretization, as above, by $I$-stable rational
approximations of the exponential function. We briefly describe this
method and state the error estimates from \cite{BakBram}.

We assume that $\cD$ is a convex polygonal domain and we let
$\{S_{h}^r\}_{0<h\leq 1}$, $r=2,3$, be a standard family of finite
element function spaces consisting of continuous piecewise polynomials
of degree $r-1$ with respect to a regular family of triangulations of
$\cD$.  Moreover, we define $S_{h,0}^r=\{v\in
S_h^r:v|_{\partial\cD}=0\}$, so that $S_{h,0}^r\subset \dH^1$.  With
$H^\beta$ denoting the standard Sobolev space we then have the error
estimate
\begin{equation}  \label{eq:ritzerror}
\|R_h v-v\| \leq Ch^\beta\|v\|_{H^\beta},
\quad v\in \dot{H}^1\cap H^{\beta},~\beta\in [1,r],
\end{equation}
where the Ritz projection
$R_h\colon\dot{H}^1\to S_{h,0}^r$ is defined by
\begin{equation*}
\la \nabla R_h v,\nabla \chi \ra=\la \nabla v,\nabla \chi \ra,\quad
\forall v\in \dH^1,\chi\in  S_{h,0}^r.
\end{equation*}
Further, we define the discrete Laplacian $\Lambda_h\colon
S_{h,0}^r\to S_{h,0}^r$ by
\begin{equation*}
(\Lambda_h\eta,\chi)=(\nabla\eta,\nabla\chi),
\quad\forall \eta,\chi\in S_{h,0}^r.
\end{equation*}
The homogeneous spatially semidiscrete wave equation is to
find $$u_h(t):=[u_{h,1}(t),u_{h,2}(t)]^T\in S_{h,0}^r\times S_{h,0}^r$$ such that
\begin{equation}\label{eq:femWave1}
\begin{bmatrix}\dot{u}_{h,1}\\\dot{u}_{h,2}\end{bmatrix}
+
\begin{bmatrix}0&-I\\ \Lambda_h& 0 \end{bmatrix}
\begin{bmatrix}u_{h,1}(t)\\u_{h,2}(t) \end{bmatrix}
=
\begin{bmatrix}0\\0\end{bmatrix}, \ t>0;
\quad
\begin{bmatrix}
 u_{h,1}(0)\\u_{h,2}(0)
\end{bmatrix}
=
\begin{bmatrix}
  P_{h,1}u_{0,1}\\P_{h,2}u_{0,2}
\end{bmatrix}
\end{equation}
Here $P_{h,1}\colon\dH^{0}\to S_{h,0}^r$ and $P_{h,2}\colon\dH^{-1}\to
S_{h,0}^r$ are the orthogonal projectors defined by $\langle
P_{h,i}f,\chi\rangle=\langle f,\chi\rangle$, $\forall \chi\in
S_{h,0}^r$, for $f\in\dH^0$ if $i=1$ and $f\in\dH^{-1}$ if $i=2$.

It is well known that $\Lambda_h$ has eigenpairs
$\{(\phi_{h,j},\lambda_{h,j})\}_{j=1}^{M_h}$, where
$\{\lambda_{h,j}\}_{j=1}^{M_h}$ is a positive, nondecreasing sequence
and $\{\phi_{h,j}\}_{j=1}^{M_h}$ an $\dH^0$-orthonormal basis of $S_{h,0}^r$. If
we write
\begin{equation*}
A_h:=\begin{bmatrix}0&-I\\ \Lambda_h& 0 \end{bmatrix}
\end{equation*}
and if $P_h=[P_{h,1},P_{h,2}]^T$ and $u_0:=[u_{0,1},v_{0,2}]^T$, then
\eqref{eq:femWave1} may be written
\begin{equation}\label{eq:femWave2}
\dot{u}_h+A_hu_h=0,\ t>0;\quad u_h(0)=P_hu_0.
\end{equation}
The operator $-A_h$ is the infinitesimal generator of a strongly
continuous semigroup $E_h(t)$ and the solution of \eqref{eq:femWave2}
is given by
\begin{equation*}
u_h(t)=E_h(t)P_hu_0.
\end{equation*}
Similarly to \eqref{eq:exgroup} the operator $E_h(t)$ has a
representation in terms of sine and cosine operators; i.e.,
\begin{equation*}
E_h(t)=\begin{bmatrix} C_h(t)&\Lambda_h^{-1/2}S_h(t)\\
-\Lambda_h^{1/2}S_h(t)&C_h(t)\end{bmatrix}
\end{equation*}
with $S_h(t)=\sin(t\Lambda_h^{1/2})$ and $C_h(t)=\cos(t\Lambda_h^{1/2})$.

The time discretization, as in the previous subsection, is performed by
$I$-stable rational single step schemes; i.e., schemes where the
rational function $R$ fulfills \eqref{eq:ratApp} for some positive
integer $p$.  The fully discrete problem on the same uniform grid as
in Subsection \ref{sec:semitime} then reads
\begin{equation}\label{eq:fdwave}
v_{h,k}^n=R(kA_h)v_{h,k}^{n-1},\ n=1,\ldots,N; \quad
 v_{h,k}^0=P_hu_0.
\end{equation}
We will henceforth write $E_{h,k}=R(kA_h)$ and the solution of
\eqref{eq:fdwave} may then be written as
\begin{equation}
v_{h,k}^n=E_{h,k}^nP_hu_0.
\end{equation}
The error estimate proved in \cite{BakBram} is as follows.  It
provides only a bound for the first component in $u$, which we express
by means of a projector $P^1$.

\begin{theorem}\label{thm:BakBra}
If $P^1\colon\cH\to\dH^{0}$ is defined as
$P^1x=x_1$ for $x=[x_1,x_2]^T\in\cH$, then
\begin{equation*}
\|P^1(E_{h,k}^nP_h-E(t_n))u_0 \|_{\dH^{0}}
\leq C(t_n)\big(h^r\|u_0 \|_{\HH^{r+1}}+k^p\|u_0\|_{\HH^{p+1}}\big),~t_n=nk\ge0.
\end{equation*}
\end{theorem}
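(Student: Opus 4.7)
The plan is to split the error by triangle inequality into a temporal and a spatial part,
\begin{equation*}
E_{h,k}^n P_h - E(t_n) = \bigl(E_{h,k}^n - E_h(t_n)\bigr) P_h + \bigl(E_h(t_n) P_h - E(t_n)\bigr),
\end{equation*}
and to bound each contribution separately.

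For the spatial part, I would extract the first block row via $P^1$, exploiting the sine/cosine representations of $E$ and $E_h$. The building blocks $(C_h(t) P_{h,1} - C(t))v$ and $(\Lambda_h^{-1/2} S_h(t) P_{h,2} - \Lambda^{-1/2} S(t))v$ are classical objects in the FEM analysis of the deterministic wave equation. Writing the spatially semidiscrete error via the Ritz projection $R_h$ as $u_h - u = (u_h - R_h u) + (R_h u - u)$, substituting into the wave equation, and applying standard energy identities to the resulting error equation, one obtains via \eqref{eq:ritzerror} estimates
\begin{align*}
\|(C_h(t)P_{h,1} - C(t))v\|_{\dH^0} &\le C(t)h^r \|v\|_{\dH^{r+1}},\\
\|(\Lambda_h^{-1/2}S_h(t)P_{h,2} - \Lambda^{-1/2}S(t))v\|_{\dH^0} &\le C(t)h^r \|v\|_{\dH^{r}}.
\end{align*}
Adding these and recalling $\HH^{r+1} = \dH^{r+1} \times \dH^r$ yields the $h^r \|u_0\|_{\HH^{r+1}}$ contribution.

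For the temporal part I would fix $h$ and diagonalise. On the finite-dimensional invariant subspace $S_{h,0}^r \times S_{h,0}^r$, the operator $-A_h$ generates a unitary group with respect to the discrete energy inner product inherited from $\HH$, and using the eigenpairs $(\phi_{h,j},\lambda_{h,j})$ of $\Lambda_h$ the operator estimate reduces to the scalar comparison $|R(iy)^n - e^{-iny}|$ with $y = k\lambda_{h,j}^{1/2}$. A telescoping identity combined with the two conditions in \eqref{eq:ratApp} gives $|R(iy)^n - e^{-iny}| \le C n |y|^{p+1}$ uniformly in $y \in \R$; summing over eigenmodes, using $nk = t_n$, and invoking $\HH^{p+1}$-stability of $P_h$ then produces
\begin{equation*}
\|(E_{h,k}^n - E_h(t_n)) P_h u_0\|_\HH \le C\, t_n k^p \|u_0\|_{\HH^{p+1}},
\end{equation*}
with a constant independent of $h$.

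The main obstacle is precisely this $h$-independence of the temporal bound: since $\|A_h\|_{\cB(\cH)} \to \infty$ as $h\to 0$, one cannot afford any constant depending on the spectral radius of $A_h$, so the argument rests crucially on the global stability condition $|R(iy)| \le 1$ in \eqref{eq:ratApp} rather than on local consistency near the origin alone. This is exactly the content of the rational approximation theory in \cite{BrennerThomee} adapted to the wave setting in \cite{BakBram}; extracting the first-component (as opposed to the full $\HH$-norm) estimate, which is sharper in regularity requirements for hyperbolic problems, is a further subtlety that uses the block structure of $E_h$ and duality in $\dH^0$.
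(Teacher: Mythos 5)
The paper does not prove this theorem at all: it is quoted verbatim from Baker and Bramble (the sentence immediately preceding it reads ``The error estimate proved in \cite{BakBram} is as follows''), so there is no internal proof to compare your argument against. Judged on its own, your outline follows exactly the classical route that \cite{BakBram} and \cite{BrennerThomee} take: split off the spatially semidiscrete error $E_h(t_n)P_h-E(t_n)$, and treat the time discretization as a rational approximation of the unitary group generated by $-A_h$, with the global stability $|R(iy)|\le 1$ in \eqref{eq:ratApp} supplying $h$-independence; your telescoping bound $|R(iy)^n-\ee^{-iny}|\le Cn|y|^{p+1}$ is correct (for $|y|>b$ one uses $|R(iy)-\ee^{-iy}|\le 2\le 2(|y|/b)^{p+1}$). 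Two caveats on where the real work is hidden. First, the spatial estimate of order $h^r$ under $\HH^{r+1}$ regularity is not a routine consequence of \eqref{eq:ritzerror} plus energy identities: for the wave equation the displacement estimate with the ``extra'' derivative is itself the main theorem of Baker's analysis (its interpolated, nonsmooth-data form is what \cite{KLS} establishes), so ``standard energy identities'' is doing a lot of unacknowledged lifting. Second, your phrase ``invoking $\HH^{p+1}$-stability of $P_h$'' conceals the step of converting the discrete graph norms $\|A_h^{\,p+1}P_hu_0\|$ that the eigenfunction expansion produces into the continuous norm $\|u_0\|_{\HH^{p+1}}$ uniformly in $h$; this needs the identity $\Lambda_hR_h=P_h\Lambda$ together with inverse estimates (and is delicate when $p+1$ exceeds $r$), and it is where much of the technical content of \cite{BakBram} resides. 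As a blind reconstruction, though, the decomposition, the stability mechanism, and the source of each of the two error terms are all correctly identified.
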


Using the stability of $E(t)$ and $E^n_{h,k}$ and a standard
interpolation argument, this results in the following bound on the
error operator.

\begin{corollary}\label{cor:BakBra}
Under the assumptions of Theorem \ref{thm:BakBra} we have, for
$\beta \geq 0$,
\begin{equation*}
\|P^1(E_{h,k}^nP_h-E(t_n))\|_{\mathcal{B}(\HH^{\beta},\dH^0)}
\leq C(t_n)\big(h^{\min(\beta\frac{r}{r+1},r)}
+k^{\min(\beta\frac{p}{p+1},p)}\big),~t_n=nk\ge0.
\end{equation*}
\end{corollary}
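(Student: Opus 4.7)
My plan is to derive the fractional bound by interpolating the endpoint estimate of Theorem \ref{thm:BakBra} against a uniform stability bound. Since the two rates $h^r$ and $k^p$ come with different regularity requirements, I first split
\begin{equation*}
P^1\big(E_{h,k}^nP_h-E(t_n)\big) = P^1\big(E_{h,k}^nP_h-E_h(t_n)P_h\big) + P^1\big(E_h(t_n)P_h-E(t_n)\big).
\end{equation*}
Inspecting the proof of Theorem \ref{thm:BakBra} in \cite{BakBram}, one sees that it proceeds by estimating these two pieces independently, thereby yielding the separate endpoint bounds
\begin{equation*}
\|P^1(E_h(t_n)P_h-E(t_n))\|_{\cB(\HH^{r+1},\dH^0)} \leq C(t_n)h^r, \qquad \|P^1(E_{h,k}^nP_h-E_h(t_n)P_h)\|_{\cB(\HH^{p+1},\dH^0)} \leq C(t_n)k^p.
\end{equation*}

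For the low-order endpoint $\beta=0$ I would invoke uniform stability in $\cB(\HH,\dH^0)$. The group $E(t)$ is unitary on $\HH$, and $A_h$ is skew-adjoint on $S_{h,0}^r\times S_{h,0}^r$ with the inherited $\HH$-inner product, so $E_h(t)$ is a contraction on $\HH$; by the $I$-stability assumption $|R(iy)|\le 1$ in \eqref{eq:ratApp}, so is $E_{h,k}^n=R(kA_h)^n$. Since $P_h$ is an orthogonal projection on $\HH$ and $\|P^1\|_{\cB(\HH,\dH^0)}=1$, both differences above are uniformly bounded in $\cB(\HH,\dH^0)$ independently of $h,k,n$.

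Next I would apply Hilbert scale interpolation. Because $\{\dH^\alpha\}$ is a Hilbert scale generated by $\Lambda^{1/2}$ and $\HH^\alpha=\dH^\alpha\times\dH^{\alpha-1}$, the product scale interpolates as $[\HH,\HH^{r+1}]_\theta=\HH^{\theta(r+1)}$. Interpolating the spatial piece between $\beta=0$ and $\beta=r+1$ gives $C(t_n)h^{\beta r/(r+1)}$ for $\beta\in[0,r+1]$; for $\beta>r+1$ the continuous embedding $\HH^\beta\hookrightarrow\HH^{r+1}$ caps the exponent at $r$. The identical argument applied to the temporal piece yields $C(t_n)k^{\min(\beta p/(p+1),p)}$. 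Summing the two pieces produces the stated bound.

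The only step requiring genuine comment is the decoupling of the endpoint estimate into purely spatial and purely temporal parts: the statement of Theorem \ref{thm:BakBra} gives only their sum, and when $r<p$ the combined estimate cannot be read off as a single endpoint bound in either $\HH^{r+1}$ or $\HH^{p+1}$. One must therefore appeal to the structure of the proof in \cite{BakBram}, where both bounds are established separately; once the two endpoints are in hand, the remaining interpolation and embedding steps are entirely routine.
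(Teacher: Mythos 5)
Your argument is essentially the paper's: the corollary is obtained from Theorem \ref{thm:BakBra} precisely by combining stability of $E(t)$ and $E_{h,k}^n$ with interpolation, and your observation that the spatial and temporal contributions must be bounded and interpolated separately (between $\HH^{0}$--$\HH^{r+1}$ and $\HH^{0}$--$\HH^{p+1}$, respectively, with the embedding capping the exponents) is exactly what the paper's phrase ``standard interpolation argument'' hides when $r\neq p$. The only imprecision is your stability claim: $A_h$ is skew-adjoint with respect to the \emph{discrete} inner product (with $\Lambda_h^{-1/2}$ weighting the second component), not the inherited $\cH$-inner product, so uniform boundedness of $E_h(t)$ and $E_{h,k}^n$ in $\cB(\cH)$ additionally uses the $h$-uniform equivalence of $\|\Lambda_h^{-1/2}v_h\|$ and $\|\Lambda^{-1/2}v_h\|$ on $S_{h,0}^r$ --- a standard fact that does not affect the conclusion.
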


We return to the stochastic wave equation whose fully discrete version
now reads, with $B_h:=P_hB=[0,P_{h,2}]^T$,
\begin{equation}\label{eq:fds}
X_{h,k}^j=E_{h,k}(X_{h,k}^{j-1}+B_h\Delta W^j),\ j=1,\ldots, N; \quad X_{h,k}^0=P_hX_0.
\end{equation}
The solution is given by
\begin{equation}\label{eq:xnh}
X^n_{h,k}=E_{h,k}^nP_hX_0+\sum_{j=1}^{n}E_{h,k}^{n-j+1}B_h\Delta W^{j}.
\end{equation}
As in the previous section we multiply by
$E_{h,k}^{N-n}$ and arrive at the drift free version
\begin{equation*}
Y^n_{h,k}=E_{h,k}^NP_hX_0+\sum_{j=1}^{n}E_{h,k}^{N-j+1}B_hW^{j}
\end{equation*}
and with piecewice constant interpolation
\begin{equation}\label{eq:Yhktilde}
\T{Y}_{h,k}(t)=\T{E}_{h,k}(T)P_hX_0+\int_0^t\T{E}_{h,k}(T-s)B_h\,\dd W(s)
\end{equation}
in exact analogy with the temporally semidiscrete case in
\eqref{eq:Yktilde}.

Next we bound the weak error for fully discrete schemes given by
\eqref{eq:fds}.  We only prove a result for the first component in
$X$.

\begin{theorem}\label{thm:fullWeakWave}
  Assume that $\norm{\Tr}{\Lambda^{\beta-1/2}Q\Lambda^{-1/2}}<\infty$
  and $X_0\in L_1(\Omega,\HH^{2\beta})$ for some $\beta\geq 0$. If
  $X_{h,k}^n$ is given by \eqref{eq:fdwave} and $X(t)$ is the weak
  solution \eqref{eq:X} of \eqref{spde} with $A,B$ as in
  \eqref{eq:waveDef}, then for $g\in
  C^2_\mathrm{b}(\dH^0,\mathbb{R})$,
  we have
\begin{equation*}
\big|\bE \big(g(X_{h,k,1}^N)-g(X_1(T))\big)\big|
\leq C(T)\big(h^{\min(2\beta\frac{r}{r+1},r)}
+k^{\min(2\beta\frac{p}{p+1},1)}\big).
\end{equation*}
\end{theorem}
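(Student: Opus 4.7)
The plan is to invoke Theorem \ref{thm:waveGen} with the choices $\T{E}(t) = \T{E}_{h,k}(t)$, $\T{P} = P_h$, $\T{B} = B_h = P_h B$, $L = P^1$, $\VV = \dH^0$, and $G(X) = g(P^1 X)$. The first step is to verify the hypotheses. The trace condition \eqref{qtdiscrete} follows by the same argument as in Lemma \ref{lem:EkBound}: the $I$-stability property \eqref{eq:ratApp} yields a uniform bound on $\|E_{h,k}^n P_h\|_{\cB(\cH)}$ (which simultaneously controls the constant $K_1$ of \eqref{eq:finTrA}), and combined with $\Tr(B_h Q B_h^*) \le \Tr(BQB^*) = \Tr(\Lambda^{-1/2} Q \Lambda^{-1/2}) < \infty$ from Lemma \ref{lem:assumpEquality} and the hypothesis on $Q$, this gives the required integrability. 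Condition \eqref{qt} and existence of the weak solution $X$ follow from Corollary \ref{cor:Qassump}.

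With the hypotheses in place, Theorem \ref{thm:waveGen} delivers
\begin{equation*}
\big|\bE\big(g(X_{h,k,1}^N) - g(X_1(T))\big)\big| \le C \sup_{t \in [0,T]} \big\|P^1 \big(\T{E}_{h,k}(t) P_h - E(t)\big)\big\|_{\cB(\HH^{2\beta}, \dH^0)},
\end{equation*}
so the weak error estimate reduces to a purely deterministic bound on the approximation of the semigroup. Following the strategy of Lemma \ref{lem:supE}, for $t \in \widehat{I}_j$ I would split
\begin{equation*}
P^1\big(\T{E}_{h,k}(t) P_h - E(t)\big) = P^1\big(E_{h,k}^j P_h - E(t_j)\big) + P^1\big(E(t_j) - E(t)\big).
\end{equation*}
The first summand is handled by Corollary \ref{cor:BakBra} applied with exponent $2\beta$, producing $C(T)\bigl(h^{\min(2\beta r/(r+1), r)} + k^{\min(2\beta p/(p+1), p)}\bigr)$. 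For the second summand, the trivial bound $\|P^1 x\|_{\dH^0} \le \|x\|_\cH$ reduces the estimate to Lemma \ref{lem:holderCont}, which gives $C|t-t_j|^{\min(2\beta, 1)} \le C k^{\min(2\beta, 1)}$, using the embedding $\HH^{2\beta} \hookrightarrow \HH^{\min(2\beta,1)}$.

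It remains to consolidate the two temporal exponents. Since $p \ge 1$ implies $p/(p+1) \le 1$, an elementary case analysis shows that for $k \le 1$ both $k^{\min(2\beta p/(p+1), p)}$ and $k^{\min(2\beta, 1)}$ are dominated by $k^{\min(2\beta p/(p+1), 1)}$, while for $k > 1$ the stability bound absorbs everything into the constant. Combining this with the spatial estimate yields exactly the claimed rate. The main obstacle I anticipate is bookkeeping rather than conceptual: one must verify that the constants in Corollary \ref{cor:BakBra} and in the bound on $K_1$ are uniform in $h$ and $k$, and that the interpolation underlying Corollary \ref{cor:BakBra} is valid in the precise norm $\cB(\HH^{2\beta}, \dH^0)$ needed here, but both are routine extensions of what has already been established.
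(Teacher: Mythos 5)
Your proposal is correct and follows essentially the same route as the paper: verify the hypotheses of Theorem \ref{thm:waveGen} with $L=P^1$, $\VV=\dH^0$, $\T{E}=\T{E}_{h,k}$, $\T{P}=P_h$, $\T{B}=B_h$, then bound the deterministic error operator by combining Corollary \ref{cor:BakBra} at the grid points with Lemma \ref{lem:holderCont} between them, exactly as in Lemma \ref{lem:supE}. The only (immaterial) difference is in checking \eqref{qtdiscrete}: the paper simply observes that $B_hQB_h^*$ has finite rank and is hence trace class, whereas you transfer the trace bound from $BQB^*$ via the projection; both are valid.
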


\begin{proof}
  The function in \eqref{eq:Yhktilde} is clearly of the form
  \eqref{eq:tildeY2} with $\T{Y}_{h,k}(T)=X_{h,k}^N$ and we have
  already seen that
  $\norm{\Tr}{\Lambda^{\beta-1/2}Q\Lambda^{-1/2}}<\infty$ implies
  \eqref{qt}. Furthermore, $$\Tr(\T{E}_{h,k}(t)B_hQB_h^*\T{E}_{h,k}(t)^*)\le
  \Tr(B_hQB_h)<\infty,$$ as $B_hQB_h$ is a bounded operator with
  finite-dimensional range and hence it is of trace class. Therefore
  also \eqref{qtdiscrete} holds and Theorem \ref{thm:waveGen} can be
  applied with $\VV=\dH^0$, $L=P^1$ (as defined in Theorem
  \ref{thm:BakBra}), $\T{E}(t)=\T{E}_{h,k}(t)$, $\T{B}=B_h$ and
  $\T{P}=P_h$.  From Corollary \ref{cor:BakBra} and Lemma
  \ref{lem:holderCont}, as in the proof of Lemma \ref{lem:supE}, it
  follows that
\begin{equation*}
\sup_{t\in[0,T]}\|P^1(\T{E}_{h,k}(t)P_h-E(t))\|_{\cB(\HH^{2\beta},\dH^0)}\leq
C(T)\left( h^{\min(2\beta\frac{r}{r+1},r)}+k^{\min(2\beta\frac{p}{p+1},1)}\right).
\end{equation*}
Finally the statement of the theorem follows from inserting this into
\eqref{eq:weakErrGen}.
\end{proof}
%
%
\subsection{Strong convergence of fully discrete schemes}\label{strong}

It is a general phenomenon that the order of weak convergence is twice
the strong order under the same regularity of the noise. This
essentially turns out to be the case also for the stochastic wave
equation discretized by the method described in the previous
section. As we are not aware of any results on strong convergence of a
fully discrete approximation of the stochastic wave equation using
finite elements in the spatial domain, we give a short derivation of
the strong order. We remark that the strong convergence is studied for
a spatially semidiscrete finite element method in \cite{KLS}, for a
fully discrete leap-frog scheme in one spatial dimension in
\cite{Walsh}, and for a spatially semidiscrete scheme in one dimension
in \cite{San}.

First we form the strong error by taking the difference of
\eqref{eq:xnh} and \eqref{eq:X}, projecting onto the first component,
and taking norms:
\begin{equation*}
\begin{split}
&\bE\Big(\|P^1(X_{h,k}^N-X(T))\|_{\dH^0}^2\Big)
\leq C\bE\Big(\|P^1(E^N_{h,k}P_h-E(T))X_0\|_{\dH^0}^2\Big)\\
&\qquad
+C\bE\Big(\Big\|P^1\int_0^T(\T{E}_{h,k}(T-s)P_h-E(T-s))B\,\dd W(s)
\Big\|_{\dH^0}^2 \Big)
=:I_1+I_2.
\end{split}
\end{equation*}
If $X_0\in L_2(\Omega,\HH^\beta)$, then
\begin{align*}
I_1&\leq
C\|P^1(E_{h,k}P_h-E(T))\|^2_{\cB(\HH^\beta,\dH^0)}
\bE\big(\|X_0\|^2_{\HH^\beta}\big)
\\
& \leq
C(T)\big(h^{\min(\beta\frac{r}{r+1},r)}+k^{\min(\beta\frac{p}{p+1},p)}\big)^2
\|X_0\|_{L_2(\Omega,\HH^\beta)}^2
\end{align*}
by Corollary \ref{cor:BakBra}. For $I_2$ we use It\^o's isometry
\eqref{eq:Itoiso} to get
\begin{equation*}
\begin{split}
I_2&=\bE\Big(\Big\|\int_0^TP^1(\T{E}_{h,k}(T-s)P_h-E(T-s))B\,\dd W(s) \Big\|_{\dH^0}^2\Big) \\
&=\int_0^T\|P^1(\T{E}_{h,k}(T-s)P_h-E(T-s))BQ^{1/2} \|_\HS^2\,\dd s \\
&=\int_0^T\|P^1(\T{E}_{h,k}(T-s)P_h-E(T-s))B\Lambda^{\frac{1-\beta}{2}}\Lambda^{\frac{\beta-1}{2}}Q^{1/2} \|_\HS^2\,\dd s \\
&\leq \int_0^T\|P^1(\T{E}_{h,k}(T-s)P_h-E(T-s))\|^2_{\cB(\HH^\beta,\dH^0)}\|\Lambda^{\frac{\beta-1}{2}}Q^{1/2} \|_\HS^2\,\dd s\\
&\leq T\sup_{t\in[0,T]}\big(\|P^1(\T{E}_{h,k}(t)P_h-E(t))\|^2_{\cB(\HH^\beta,\dH^0)}\big)\|\Lambda^{\frac{\beta-1}{2}}Q^{1/2} \|_\HS^2\\
&\leq C(T)\|\Lambda^{\frac{\beta-1}{2}}Q^{1/2} \|_\HS^2
\big(h^{\min(\beta\frac{r}{r+1},r)}+k^{\min(\beta\frac{p}{p+1},1)}\big)^2,
\end{split}
\end{equation*}
where the first inequality follows from the fact that
$\|B\Lambda^{\frac{1-\beta}{2}}\|_{\cB(\dH^0,\cH^\beta)}=1$ combined with \eqref{eq:TrHS2}, and the last inequality
from Corollary \ref{cor:BakBra} and Lemma \ref{lem:holderCont} as in
the proof of Lemma \ref{lem:supE}. Combining the bounds for $I_1$ and
$I_2$ and taking square roots, we have shown the following result.

\begin{theorem}\label{thm:str}
  Let $\|\Lambda^{\frac{\beta-1}{2}}Q^{1/2} \|_\HS^2<\infty$ and
  $X_0\in L_2(\Omega,\HH^\beta)$ for some $\beta \ge 0$. Then the
  strong error of the approximation $X^N_{h,k,1}=P^1X^N_{h,k}$ of the
  displacement $X_1(T)=P^1X(T)$ in the stochastic wave equation is
  bounded by
\begin{equation*}
\|X_{h,k,1}^N-X_1(T)\|_{L_2(\Omega,\dH^0)}
\leq C(T)\big(h^{\min(\beta\frac{r}{r+1},r)}+k^{\min(\beta\frac{p}{p+1},1)}\big).
\end{equation*}
\end{theorem}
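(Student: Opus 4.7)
The plan is to split the mean-square error between \eqref{eq:xnh} (projected onto the first coordinate) and the first coordinate of the mild solution \eqref{eq:X} into a deterministic contribution from the initial datum and a stochastic-convolution contribution, and to bound each using the deterministic estimate in Corollary \ref{cor:BakBra} together with It\^o's isometry. Concretely, I would write
\begin{equation*}
\bE\bigl(\|P^1(X_{h,k}^N - X(T))\|_{\dH^0}^2\bigr) \le 2 I_1 + 2 I_2,
\end{equation*}
where $I_1$ involves the deterministic error operator $P^1(\T{E}_{h,k}(T)P_h - E(T))$ acting on $X_0$, and $I_2$ is the squared $L_2(\Omega,\dH^0)$-norm of the stochastic integral of $P^1(\T{E}_{h,k}(T-s)P_h - E(T-s))B$ against $W$.

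For $I_1$, the operator norm in $\cB(\HH^\beta,\dH^0)$ can be pulled out of the expectation, leaving $\|X_0\|_{L_2(\Omega,\HH^\beta)}^2$ as a factor. Then Corollary \ref{cor:BakBra} applied at the grid point $t_N=T$ yields the stated rate.

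For $I_2$, It\^o's isometry \eqref{eq:Itoiso} rewrites the expectation as the time integral of a squared Hilbert--Schmidt norm. The key step is to insert $\Lambda^{(1-\beta)/2}\Lambda^{(\beta-1)/2}$ between $B$ and $Q^{1/2}$. Then \eqref{eq:TrHS2} splits the HS norm as the product of the $\cB(\HH^\beta,\dH^0)$-operator norm of the deterministic error operator (using the identity $\|B\Lambda^{(1-\beta)/2}\|_{\cB(\dH^0,\HH^\beta)}=1$) and the finite quantity $\|\Lambda^{(\beta-1)/2}Q^{1/2}\|_{\HS}$. Corollary \ref{cor:BakBra} controls the error operator at grid points, and to extend the bound uniformly to all $s\in[0,T]$ I would bridge to the nearest grid point using the H\"older continuity of $E(\cdot)$ from Lemma \ref{lem:holderCont}, mirroring the argument in Lemma \ref{lem:supE}. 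Pulling the supremum out of the time integral then completes the bound on $I_2$.

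Combining the two bounds and taking square roots gives the theorem. The principal technical care is only in the bookkeeping of powers of $\Lambda$ so that the hypothesis $\|\Lambda^{(\beta-1)/2}Q^{1/2}\|_{\HS} < \infty$ is exactly the one exposed by the factorization; no further conceptual difficulty arises beyond the machinery already developed in Subsections \ref{sec:semitime} and \ref{timespace}.
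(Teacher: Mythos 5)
Your proposal is correct and follows essentially the same route as the paper: the same splitting into an initial-data term and a stochastic-convolution term, It\^o's isometry with the insertion of $\Lambda^{(1-\beta)/2}\Lambda^{(\beta-1)/2}$ and the identity $\|B\Lambda^{(1-\beta)/2}\|_{\cB(\dH^0,\HH^\beta)}=1$, and the extension of Corollary \ref{cor:BakBra} to all times via Lemma \ref{lem:holderCont} as in Lemma \ref{lem:supE}. No gaps.
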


The regularity assumption on $Q$ in Theorem \ref{thm:fullWeakWave} implies the
assumption in Theorem \ref{thm:str}, see Theorem \ref{thm:aq} with
$s=\beta-1$ in \eqref{c2}.  Thus the claim that the weak rate is
essentially twice the strong rate is justified (if $\beta$ is not too
large) by comparing Theorems \ref{thm:fullWeakWave} and \ref{thm:str}.
Note also that the mean-square regularity is of order $\beta$ according to Theorem \ref{cor:QassumpA}.

\section{Application to parabolic equations}\label{sec:para}

Here, we give a detailed weak error analysis of a fully discrete
scheme for the linearized Cahn-Hilliard-Cook (CHC) equation and also
comment on the linear stochastic heat equation.

The linearized CHC equation, see \cite{LM}, is
\begin{equation*}
  \dd X + \Lambda^2 X\,\dd t=\dd W,\ t>0;\quad X(0)=X_0,
\end{equation*}
where now $\Lambda=-\Delta $ is the Laplacian together with
homogeneous Neumann boundary conditions.  To write the CHC equation in
the form \eqref{spde} we therefore set $\HH=\{f\in L_2(\cD):\la
f,1\ra=0\}$, $A=\Lambda^2$ with $D(\Lambda)=\{f\in H^2(\cD)\cap
\HH:\frac{\partial f}{\partial n}=0\}$, where $\cD$ is a convex
polygonal domain, and we take $\cU=\HH$ and $B=I$. We further define
the the spaces $\dH^\alpha=D(\Lambda^\alpha)$  in analogy with Section
\ref{sec:hyper}.  Thus, $\cH=\dH^0$, $D(A)=\dH^4$ and $-A$ is known to be the
infinitesimal generator of the analytic semigroup
$E(t)=\ee^{-tA}=\ee^{-t\Lambda^2}$ on $\HH$.

We recall the finite element spaces $S_{h}^r$ (without boundary
conditions) of order $r=2,3$ from Subsection \ref{timespace} and set
$\dot{S}_h^r=\{v\in S_h^r: \la v,1 \ra=0\}$.  We now define the
discrete Laplacian $\Lambda_h\colon \dot{S}_h^r\to \dot{S}_h^r$ and
the Ritz projector $R_h\colon\dot{H}^1\to \dot{S}_h^r$ in the
analogous way and we have an error bound of the same form as in
\eqref{eq:ritzerror}.  We set $A_h=\Lambda_h^2$ and note that $-A_h$
is the generator of an analytic semigroup $E_h(t)$ on
$\dot{S}_h^r$.  We consider only the backward Euler time-stepping and
therefore introduce $E_{h,k}=(1+kA_h)^{-1}$ and define
$\tilde{E}_{h,k}(t)$ in an analogous fashion to the case of the
wave equation, see \eqref{eq:Yktilde}.

We need error bounds for the approximation of the semigroup.
We claim that, for all $v\in \cH$,
\begin{equation}\label{eq:CHerror}
\|(E_{h,k}^nP_h-E(t_n))v\|
\leq C(h^{\alpha}+k^{\alpha/4})t_n^{-\alpha/4}\|v\|,
\quad t_n=kn,~\alpha\in [0,r],
\end{equation}
where $P_h\colon\cH\to \dot{S}_h^r$ denotes the $L_2(\cD)$-orthogonal
projection to $\dot{S}_h^r$. To see this we write
\begin{equation*}
E_{h,k}^nP_hv-E(t_n)v
=\big(E_{h,k}^nP_hv-E_h(t_n)P_hv\big)+\big(E_h(t_n)P_hv-E(t_n)v\big).
\end{equation*}
It is well known and follows by a simple spectral argument, as $A_h$ is self-adjoint positive semidefinite on $\dot{S}_h^r$, that the estimate
\begin{equation}\label{eq:CHrat}
\|E_{h,k}^nP_hv-E_h(t_n)P_hv\|
\leq C k^\gamma t_n^{-\gamma}\|v \|,\quad \gamma\in[0,1],
\end{equation}
holds for the backward Euler method \cite{LR}. It follows from the stability of the
finite element approximation and \cite[Corollary 5.3]{EL} that
\begin{equation}\label{eq:CHfem}
\| E_h(t)P_hv-E(t)v\|\leq Ch^{\gamma}t^{-\gamma/4}\|v \|,\quad\gamma\in[0,r].
\end{equation}
Thus, with $\gamma=\alpha/4\le r/4\le1$ in \eqref{eq:CHrat} and
$\gamma=\alpha$ in \eqref{eq:CHfem}, the estimate \eqref{eq:CHerror} follows.

It is also well known (see, for example, \cite[Theorem 6.13]{Pazy}) that
\begin{equation}\label{eq:eee}
\|(E(t)-E(s))A^{-\gamma}v\|\leq |t-s|^\gamma\|v\|,\quad \gamma\in [0,1],
\end{equation}
and therefore, taking also \eqref{eq:CHerror} into account, it follows that
\begin{equation}\label{eq:CHtildeError}
\|(\tilde{E}_{h,k}(t)P_h-E(t))v\|
\leq C(h^\alpha+k^{\alpha/4})t^{-\alpha/4}\|v\|, \quad \alpha\in [0,r].
\end{equation}
Indeed, for $t\in(t_{j-1},t_j]$ we have that
\begin{equation*}
\begin{split}
\|(\tilde{E}(t)P_h-E(t))v \|
&= \|(E_{h,k}^jP_h-E(t))v\|\\
&\leq
\|(E_{h,k}^jP_h-E(t_{j}))v\|+\|(E(t_j)-E(t))v\|.
\end{split}
\end{equation*}
For the first term \eqref{eq:CHerror} applies and for the second
term we use \eqref{eq:eee}:
\begin{equation*}
  \begin{split}
\|(E(t_j)-E(t))v\|&=\|A^{\alpha/4}E(t)(E(t_j-t)-I)A^{-\alpha/4}v\|\\
&\leq
\|A^{\alpha/4}E(t)\| \|(E(t_j-t)-I)A^{-\alpha/4}v\|
\leq
C k^{\alpha/4} t^{-\alpha/4}\|v\|.
\end{split}
\end{equation*}
Finally, we recall the smoothing property of the backward Euler
scheme. It follows from \cite[Lemma 7.3]{Thomeebook} by stability and
interpolation that for $t\in (t_{j-1},t_j]$,
\begin{equation*}
\|A_h^\alpha\tilde{E}_{h,k}(t)P_hv \|
=\|A^{\alpha}_hE_{h,k}^jP_hv\|
\leq Ct_j^{-\alpha}\|P_hv\|\leq Ct^{-\alpha}\|v\|, \quad\alpha\ge 0.
\end{equation*}
Therefore,
\begin{equation}\label{eq:AEhkbound}
\|A_h^\alpha\tilde{E}_{h,k}(t)P_hv \|\leq Ct^{-\alpha}\|v\|, \quad\alpha\ge 0,~t>0.
\end{equation}
We are now in the position to prove the following
estimate for the weak error in case of the linearized CHC equation. As
it was the case for the wave equation, the weak convergence rate is
twice that of the strong convergence rate \cite{LM} (up to a
logarithmic factor) under essentially the same regularity requirements
on $A$ and $Q$.

\begin{theorem}
  Let $X$ be the solution of \eqref{spde} and $X^n_{h,k}$ be given by
  \eqref{eq:xnh} with spaces and operators described above and
  $B_h=P_h$. Assume $G\in\Cbb(\HH,\R)$, $X_0\in L_1(\Omega,\HH)$ and
\begin{equation}\label{eq:CHCass1}
\|A^{(\beta-2)/2}Q\|_{\Tr}\leq K,
\quad
\|A_h^{(\beta-2)/2}P_hQ\|_{\Tr}\leq K,
\end{equation}
for some  $\beta\in(0,\tfrac{r}2]$ and $K>0$. Then there is $C$
depending on $T$, $K$, $\|X_0\|_{L_1(\Omega,\cH)}$, and
$\|G\|_{\Cbb(\HH,\R)}$ such that, for
$Nk=T,~h^4+k<T$,
\begin{equation}\label{eq:c}
\big|\EE(G(X^N_{h,k})-G(X(T)))\big|
\leq C \big(h^{2\beta}+k^{\beta/2}\big)\log(\tfrac{T}{h^4+k}).
\end{equation}
\end{theorem}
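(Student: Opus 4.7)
The plan is to apply Theorem \ref{main} with $\T{X}_0=P_hX_0$, $\T{B}=P_h$, $\T{E}(t)=\T{E}_{h,k}(t)$, and to control the two resulting terms of \eqref{e1}. To check the hypotheses \eqref{qt} and \eqref{qtdiscrete}, Theorem \ref{thm:aq} with $s=(\beta-2)/2$ turns $\|A^{(\beta-2)/2}Q\|_{\Tr}\le K$ into $\|A^{(\beta-2)/4}Q^{1/2}\|_{\HS}^2\le K$, and the factorization $Q^{1/2}=A^{(2-\beta)/4}\cdot A^{(\beta-2)/4}Q^{1/2}$ combined with the analyticity bound $\|E(t)A^{(2-\beta)/4}\|_{\cB(\cH)}\le Ct^{-(2-\beta)/4}$ gives $\int_0^T\|E(t)Q^{1/2}\|_{\HS}^2\,\dd t<\infty$ since $\beta>0$; the discrete version follows analogously from \eqref{eq:AEhkbound}. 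The initial--data term of \eqref{e1} is estimated directly by \eqref{eq:CHtildeError} at $t=T$ with $\alpha=2\beta\in(0,r]$, contributing at most $C\|G\|_{\Cbb}(h^{2\beta}+k^{\beta/2})T^{-\beta/2}\|X_0\|_{L_1(\Omega,\cH)}$, which is absorbed into the claim.

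For the trace term of \eqref{e1} I choose the form \eqref{eq:F2}, abbreviate $\mathcal{O}^\pm(t):=\T{E}_{h,k}(T-t)P_h\pm E(T-t)$, and use \eqref{eq:trace1} and \eqref{eq:TrHS1} to get
\begin{equation*}
|\Tr(u_{xx}(\T{Y}(t),t)\mathcal{O}^-(t)Q\mathcal{O}^+(t)^*)|\le \|G\|_{\Cbb}\,\|\mathcal{O}^-(t)Q^{1/2}\|_{\HS}\,\|\mathcal{O}^+(t)Q^{1/2}\|_{\HS}.
\end{equation*}
For the ``$+$'' factor I split $\mathcal{O}^+(t)Q^{1/2}$ into its two summands, use $\|SQ^{1/2}\|_{\HS}^2=\Tr(SQS^*)$ together with the cyclicity \eqref{eq:trace2} and the bound $|\Tr(XY)|\le\|X\|_{\cB(\cH)}\|Y\|_{\Tr}$ to insert $A_h^{(2-\beta)/2}A_h^{(\beta-2)/2}$ (respectively $A^{(2-\beta)/2}A^{(\beta-2)/2}$), and conclude from \eqref{eq:AEhkbound}, the analogous continuous smoothing, and the assumptions on $Q$ that $\|\mathcal{O}^+(t)Q^{1/2}\|_{\HS}\le CK^{1/2}(T-t)^{-(2-\beta)/4}$. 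The same bound holds for $\|\mathcal{O}^-(t)Q^{1/2}\|_{\HS}$ by the triangle inequality alone, but without any convergence rate.

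To extract convergence from the ``$-$'' factor I write $Q^{1/2}=A^{(2-\beta)/4}\cdot A^{(\beta-2)/4}Q^{1/2}$, reducing the problem to the bound
\begin{equation*}
\|\mathcal{O}^-(t)A^{(2-\beta)/4}\|_{\cB(\cH)}\le C(h^{2\beta}+k^{\beta/2})(T-t)^{-(\beta+2)/4},
\end{equation*}
which is the smoothed companion of \eqref{eq:CHtildeError} and follows from the spectral smoothing of $A^\gamma E(t)$ and $A_h^\gamma\T{E}_{h,k}(t)$ combined with the deterministic error estimates \eqref{eq:CHrat} and \eqref{eq:CHfem}. This yields $\|\mathcal{O}^-(t)Q^{1/2}\|_{\HS}\le CK^{1/2}(h^{2\beta}+k^{\beta/2})(T-t)^{-(\beta+2)/4}$, so the trace integrand is bounded by $CK(h^{2\beta}+k^{\beta/2})(T-t)^{-1}$. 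Splitting $[0,T]$ at $T-(h^4+k)$, the outer piece integrates to the factor $\log(T/(h^4+k))$; on the inner piece $[T-(h^4+k),T]$ I drop the convergence factor and use only the non-convergent bound $\|\mathcal{O}^\pm(t)Q^{1/2}\|_{\HS}\le CK^{1/2}(T-t)^{-(2-\beta)/4}$, whose product integrates to $CK(h^4+k)^{\beta/2}\le CK(h^{2\beta}+k^{\beta/2})$. Summing these two contributions, together with the initial--data estimate, yields \eqref{eq:c}. The main obstacle is establishing the smoothed error bound displayed above, which requires a careful treatment of the non-commutativity of $P_h$ with $A$ and $A_h$ in the biharmonic setting; all other steps are routine consequences of the trace/HS calculus collected in Section \ref{pre}.
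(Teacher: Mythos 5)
Your overall architecture (Theorem \ref{main}, separate treatment of the initial-data term via \eqref{eq:CHtildeError}, and the $\log(T/(h^4+k))$ arising from splitting the time integral at $h^4+k$) matches the paper, but your treatment of the trace term contains a genuine gap. You estimate the integrand by the symmetric Hilbert--Schmidt splitting \eqref{eq:TrHS1}, which forces you to place the weight $A^{(2-\beta)/4}$ on \emph{both} factors, since the only integrability you have for $Q$ is $\|A^{(\beta-2)/4}Q^{1/2}\|_{\HS}^2\le K$ (there is no freedom to redistribute the exponent). This is exactly what produces your ``smoothed companion'' estimate
\begin{equation*}
\|(\T{E}_{h,k}(t)P_h-E(t))A^{(2-\beta)/4}\|_{\cB(\cH)}\le C(h^{2\beta}+k^{\beta/2})\,t^{-(\beta+2)/4},
\end{equation*}
which you correctly identify as the main obstacle but do not prove --- and which is in fact false in the range of the theorem. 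Since $A$, $A_h$ are selfadjoint and $\T{E}_{h,k}(t)P_h$, $E(t)$ are selfadjoint functions of them, the left-hand side equals $\|A^{(2-\beta)/4}(\T{E}_{h,k}(t)P_h-E(t))\|_{\cB(\cH)}$, i.e.\ an error bound of order $h^{2\beta}$ measured in the $\dH^{2-\beta}$-norm for $L_2$ data; the total order is $2\beta+(2-\beta)=\beta+2$, which exceeds the maximal order $r$ attainable by finite elements of degree $r-1$ in positive norms whenever $\beta>r-2$. For $r=2$ this already fails for every admissible $\beta\in(0,1]$ (e.g.\ $\beta=1$ would require an $O(h^2)$ bound in the $\dH^1$-norm for piecewise linears). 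No amount of care with the non-commutativity of $P_h$ rescues this; the estimate itself is not available.

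The paper circumvents precisely this difficulty with an asymmetric splitting: using \eqref{eq:trace1} and \eqref{eq:TrHS2} it bounds the integrand by $\|u_{xx}\|_{\cB(\cH)}\,\|(\T{E}_{h,k}(T-t)P_h+E(T-t))Q\|_{\Tr}\,\|\T{E}_{h,k}(T-t)P_h-E(T-t)\|_{\cB(\cH)}$, so that the error operator appears only in the plain operator norm, where \eqref{eq:CHtildeError} with $\alpha=2\beta$ gives $(h^{2\beta}+k^{\beta/2})t^{-\beta/2}$, while \emph{all} of the smoothing against $Q$ is carried by the ``$+$'' factor, where inserting $A_h^{-(\beta-2)/2}A_h^{(\beta-2)/2}$ and $A^{-(\beta-2)/2}A^{(\beta-2)/2}$ together with \eqref{eq:AEhkbound}, analyticity, and both conditions in \eqref{eq:CHCass1} yields $CKt^{(\beta-2)/2}$; the product is $CK(h^{2\beta}+k^{\beta/2})t^{-1}$ and the rest of your argument (the split at $h^4+k$, the near-singularity piece bounded by $(h^4+k)^{\beta/2}$) goes through. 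If you replace your Hilbert--Schmidt splitting by this trace-norm/operator-norm splitting, your proof is repaired; as written, the key lemma it rests on is unprovable.
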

\begin{proof}
  Assumption \eqref{eq:CHCass1} guarantees that
  $\|A^{\frac{\beta-2}{4}}Q^{\frac12}\|_{\HS}<\infty$ in view of
  Theorem \ref{thm:aq}.  This in its turn implies that $X$ exists,
  as shown in \cite{LM}.  Will use Theorem \ref{main} with $\cO$ as
  in \eqref{eq:F1}. Furthermore, let
  $\tilde{B}=P_h,\tilde{X}_0=P_h X_0$,
  $\tilde{E}(t)=\tilde{E}_{h,k}(t)$ and $\tilde{Y}_{h,k} (t)$ be defined as
  in \eqref{eq:Yhktilde}, whence $\tilde{Y}_{h,k}(T)=X^N_{h,k}$. The use of
  Theorem \eqref{main} is justified since
  $\tilde{E}_{h,k}(t)P_hQ[\tilde{E}_{h,k}(t)P_h]^*$ is bounded and of
  finite rank so that \eqref{qtdiscrete} holds.

  We write $\tilde{F}_{h,k}(t)=\tilde{E}_{h,k}(t)P_h -E(t)$ and recall
  \eqref{eq:CHtildeError}.  For the first term in \eqref{e1} we use
  that $\tilde{Y}_{h,k}(0)-Y(0)=\tilde{F}_{h,k}(T)$,
  \eqref{eq:CHtildeError}, and the bound for $u_x$ in
  \eqref{eq:uderBound} to get
\begin{equation}\label{eq:firstterm}
\begin{aligned}
&\Big|\bE\int_0^1\Big\la u_x\big(Y(0)+s(\tilde{Y}_{h,k}(0)-Y(0) ),0\big),
\tilde{Y}_{h,k}(0)-Y(0) \big\ra\,\dd s\Big|
\\ &\qquad
\leq \sup_{x\in\HH}\|u_x(x,0)\| \, \bE\big(\|\tilde{F}_{h,k}(T) X_0\|\big)
\\ &\qquad
\leq \|G\|_{\Cbb(\HH,\R)} C (h^{2\beta}+k^{\beta/2})
T^{-\beta/2}\EE\big(\|X_0\|_{\cH}\big).
\end{aligned}
\end{equation}

For the second term of \eqref{e1} we have by \eqref{eq:trace1} and
repeated use of \eqref{eq:TrHS2} that
\begin{equation*}
\begin{split}
&\Big|\EE\int_0^T\Tr\Big(u_{xx}\big(\tilde{Y}_{h,k}(t),t\big)
(\tilde{E}_{h,k}(T-t)P_h+E(T-t))Q\tilde{F}_{h,k}(T-t)^*\Big)\, \dd t\Big|\\
&\quad \leq
\EE\Big(\int_0^T\|u_{xx}(\tilde{Y}_{h,k}(t),t)
(\tilde{E}_{h,k}(T-t)P_h+E(T-t))Q\|_{\Tr}
\|\tilde{F}_{h,k}(T-t) \|_{\cB(\HH)}\, \dd t \Big)\\
&\quad \leq \sup_{(x,t)\in \HH\times [0,T]}\|u_{xx}(x,t)\|_{\cB(\HH)}\\
&\quad\quad\times\int_0^T
\|(A_h^{-(\beta-2)/2}\tilde{E}_{h,k}(t) A_h^{(\beta-2)/2}P_h
+A^{-(\beta-2)/2} E(t) A^{(\beta-2)/2})Q\|_{\Tr}\\
&\quad\quad \quad\times
\|\tilde{F}_{h,k}(t)\|_{\cB(\HH)}\, \dd t \\
&\quad \leq \|G\|_{\Cbb(\HH,\R)}
\int_0^T\Big(\|A_h^{-(\beta-2)/2}\tilde{E}_{h,k}(t)\|_{\cB(\HH)}\,
\|A_h^{(\beta-2)/2} P_hQ\|_{\Tr}  \\
&\quad\quad +\|A^{-(\beta-2)/2} E(t)\|_{\cB(\HH)} \,
\|A^{(\beta-2)/2}Q\|_{\Tr}\Big)\,\|\tilde{F}_{h,k}(t)\|_{\cB(\HH)} \, \dd t \\
&\quad \leq \|G\|_{\Cbb(\HH,\R)}
\Big(\| A_h^{(\beta-2)/2}P_hQ\|_{\Tr}
+\|A^{(\beta-2)/2}Q\|_{\Tr} \Big)\\
&\quad\quad \times \int_0^T\Big(
 \| A_h^{-(\beta-2)/2}\tilde{E}_{h,k}(t)\|_{\cB(\HH)}
+\|A^{-(\beta-2)/2} E(t)\|_{\cB(\HH)}\Big)
\|\tilde{F}_{h,k}(t)\|_{\cB(\HH)}\, \dd t.
\end{split}
\end{equation*}
By \eqref{eq:CHCass1} the factors in front of the integral are bounded
by $2K \|G\|_{\Cbb(\HH,\R)}$.

We proceed by splitting the integral in two as
$\int_0^T=\int_0^{h^4+k}+\int_{h^4+k}^T$. For the first integral we
notice that the last factor of the integrand is uniformly bounded and
hence, by the analyticity of $E(t)$ and \eqref{eq:AEhkbound} with $\alpha=-(\beta-2)/2$,
\begin{equation*}
\begin{split}
&\int_0^{h^4+k}\Big(\|A_h^{-(\beta-2)/2}\tilde{E}_{h,k}(t)\|_{\cB(\HH)}
+\|A^{-(\beta-2)/2} E(t)\|_{\cB(\HH)}\Big)
\|\tilde{F}_{h,k}(t)\|_{\cB(\HH)}\, \dd t\\
&\qquad \leq C\int_0^{h^4+k} t^{(\beta-2)/2} \,\dd t
=C(h^4+k)^{\beta/2}\le C(h^{2\beta}+k^{\beta/2}).
\end{split}
\end{equation*}
For the second part we use again the analyticity of $E(t)$,
\eqref{eq:CHtildeError} with $\alpha=2\beta$ and \eqref{eq:AEhkbound} to get
\begin{equation*}
\begin{aligned}
&\int_{h^4+k}^T \Big(\|A_h^{-(\beta-2)/2}\tilde{E}_{h,k}(t)\|_{\cB(\HH)}
+\|A^{-(\beta-2)/2} E(t)\|_{\cB(\HH)}\Big)
\|\tilde{F}_{h,k}(t)\|_{\cB(\HH)}\, \dd t\\
&\quad \leq C\int_{h^4+k}^T
\big(t^{(\beta-2)/2}+t^{(\beta-2)/2}\big)\big(h^{2\beta}+k^{\beta/2}\big)t^{-\beta/2}
\, \dd t\\
&\quad =C(h^{2\beta}+k^{\beta/2})\int_{h^4+k}^Tt^{-1}\,\dd t
= C \log(\tfrac{T}{h^4+k})(h^{2\beta}+k^{\beta/2}).
\end{aligned}
\end{equation*}
\end{proof}
\begin{remark}
We refer to \cite[Theorem 4.4]{KLLweak} for $h$-independent conditions
guaranteeing \eqref{eq:CHCass1} and the remarks after its proof for further discussions of the abstract conditions. Furthermore, the dependence on $T$ of $C$ in \eqref{eq:c} can be removed if we assume that $X_0\in L_1(\Omega,\dH^{2\beta})$ by using the deterministic error estimate for smooth initial data from \cite{LM} in \eqref{eq:firstterm}.
\end{remark}

\begin{remark}\label{rem:dp}
  The weak convergence of the finite element space discretization and
  backward Euler time discretization of stochastic heat equation with
  additive noise was considered in \cite{debusscheprintems}. The
  results there can be recovered using the fully discrete
  deterministic estimates
$$\|E^n_{h,k}P_h-E(t_n)\|_{\cB(\cH)}
\le C(h^2+k)t_n^{-1}
$$
and
$$\|\Lambda^{\alpha}E^n_{h,k}P_h \|_{\cB(\cH)}  +  \|\Lambda^{\alpha}E(t_n) \|_{\cB(\cH)}
\le Ct_n^{-\alpha},\quad\alpha\in [0,\tfrac{1}{2}],
$$
together with Theorem \ref{main}. The technicalities are the same as in the spatially
semidiscrete case \cite[Theorem 4.1]{KLLweak} under the same symmetric
condition $$\|\Lambda^{\frac{\beta-1}{2}}Q^{\frac12}\|_{\HS}<\infty.$$
We do not detail this here any further as it recovers a known result,
only with perhaps a more transparent proof.
\end{remark}


\end{document}